\newtheorem{theorem}{Theorem}[section]
\newtheorem{lemma}[theorem]{Lemma}
\newtheorem{prop}[theorem]{Proposition}
\theoremstyle{definition}
\theoremstyle{remark}
\numberwithin{equation}{section}
\newcommand{\R}{{\mathbb R}}
\newcommand{\N}{{\mathbb N}}
\renewcommand{\leq}{\leqslant}
\renewcommand{\le}{\leqslant}
\renewcommand{\ge}{\geqslant}
\renewcommand{\epsilon}{\varepsilon }
\newcommand{\supp}{{\rm supp}\;}
\author[S. Dipierro]{Serena Dipierro}
\address[Serena Dipierro]{Maxwell Institute for Mathematical Sciences and School of Mathematics,
University of Edinburgh, James Clerk Maxwell Building,
Peter Guthrie Tait Road, Edinburgh EH9 3FD, United Kingdom.
}
\email{serena.dipierro@ed.ac.uk}
\author[E. Valdinoci]{Enrico Valdinoci}
\address[Enrico Valdinoci]{Weierstra{\ss} 
Institut f\"ur Angewandte Analysis und Stochastik, 
Mohrenstra{\ss}e 39, 10117 Berlin, Germany.}
\email{enrico.valdinoci@wias-berlin.de}
\begin{document}

\subjclass[2010]{46E35, 35A15}

\keywords{Weighted fractional Sobolev spaces, density properties.}

\thanks{{\it Acknowledgements}.
It is a pleasure to thank Rupert Frank for a very interesting discussion.
The first author has been supported by EPSRC grant  EP/K024566/1
\emph{Monotonicity formula methods for nonlinear Pde's}.
The second author has been supported by ERC grant 277749 \emph{EPSILON Elliptic
Pde's and Symmetry of Interfaces and Layers for Odd Nonlinearities}.}

\title[A density property]
{A density property for fractional weighted Sobolev spaces}

\begin{abstract}
In this paper we show a density property for fractional weighted Sobolev spaces. 
That is, we prove that any function in a fractional weighted Sobolev space
can be approximated by a smooth function with compact support. 

The additional difficulty in this nonlocal
setting is caused by the fact that the weights
are not necessarily translation invariant.
\end{abstract}

\maketitle

{\small
\tableofcontents
}

\section{Introduction}

Goal of this paper is to provide an approximation result
by smooth and compactly supported functions
for a fractional Sobolev space with weights that are
not necessarily translation invariant. 

The functional framework is the following.
Given $s\in(0,1)$, $p\in(1,+\infty)$ and 
\begin{equation}\label{range}
a\in\left[0,\;\frac{n-sp}{2}\right)
\end{equation}
we introduce the semi-norm
\begin{equation}\label{norm}
[u]_{\widetilde{W}^{s,p}_a(\R^n)}:=
\left(\iint_{\R^{2n}}\frac{|u(x)-u(y)|^p}{|x-y|^{n+sp}}\,\frac{dx}{|x|^a}
\,\frac{dy}{|y|^a}\right)^{1/p}.
\end{equation}
We define the space 
$$ \widetilde{W}^{s,p}_a(\R^n):=\{u:\R^n\to\R  {\mbox{ measurable s.t. }} 
[u]_{\widetilde{W}^{s,p}_a(\R^n)}<+\infty\}.$$
Also, we define the weighted norm 
\begin{equation}\label{UP}
\|u\|_{L^{p^*_s}_a(\R^n)}:=\left(\int_{\R^n}\frac{|u(x)|^{p^*_s}}{|x|^{\frac{2ap^*_s}{p}}}\,dx
\right)^{1/p^*_s}, 
\end{equation}
where $p^*_s$ is the fractional critical Sobolev exponent associated to $p$, namely 
$$ p^*_s:=\frac{np}{n-sp}.$$
Moreover, we set
$$ L^{p^*_s}_a(\R^n):=\{u:\R^n\to\R  {\mbox{ measurable s.t. }} 
\|u\|_{L^{p^*_s}_a(\R^n)}<+\infty\}.$$
The importance of the weighted norm in~\eqref{UP}
lies in the fact that, when~$a$ lies in the range
prescribed by~\eqref{range},
a weighted fractional Sobolev inequality holds true,
as proved in~\cite{AB}: more precisely, there exists a constant~$C_{n,s,p,a}>0$
such that
$$ \|u\|_{L^{p^*_s}_a(\R^n)}\le
C_{n,s,p,a}\, [u]_{\widetilde{W}^{s,p}_a(\R^n)},$$
for any~$u\in C^\infty_0(\R^n)$.
So we define~$\dot{W}^{s,p}_a(\R^n)
:=\widetilde{W}^{s,p}_a(\R^n)\cap L^{p^*_s}_a(\R^n)$, which is naturally
endowed with the norm
\begin{equation}\label{norm2}
\|u\|_{\dot{W}^{s,p}_a(\R^n)}:=
[u]_{\widetilde{W}^{s,p}_a(\R^n)}+\|u\|_{L^{p^*_s}_a(\R^n)}.\end{equation}
The space~$\dot{W}^{s,p}_a(\R^n)$ has recently appeared
in the literature in several circumstances, such as
in a clever change of variable (see~\cite{frank}), and
in a critical and fractional Hardy equation (see~\cite{DMPS}).
Even the case with~$a=0$ presents some applications,
see e.g.~\cite{maria}.\medskip

A natural question is whether functions
with finite norm in~$\dot{W}^{s,p}_a(\R^n)$
can be approximated by smooth functions with compact support.
This is indeed the case, as stated by our main result:

\begin{theorem}\label{TH}
For any $u\in\dot{W}^{s,p}_a(\R^n)$ there exists a sequence of 
functions $u_\epsilon\in C^{\infty}_0(\R^n)$ such that 
$\|u-u_\epsilon\|_{\dot{W}^{s,p}_a(\R^n)}\to0$ as $\epsilon\to0$. 
Namely, $C^\infty_0(\R^n)$ is dense in $\dot{W}^{s,p}_a(\R^n)$. 
\end{theorem}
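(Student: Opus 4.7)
The plan is the classical two-step approximation: first truncate $u$ to a compact annulus $B_R\setminus B_\delta$ (bounded away from both infinity and the origin), then mollify. The main difficulty, as emphasized in the abstract, is the non-translation-invariance of the weight.

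For the truncation, I would pick $\chi\in C^\infty_0(\R^n)$ radial with $\chi=1$ on $B_1$ and $\supp\chi\subset B_2$, set $\Psi_{R,\delta}:=\chi(\cdot/R)\bigl(1-\chi(\cdot/\delta)\bigr)$ and $u_{R,\delta}:=u\,\Psi_{R,\delta}$. Convergence $u_{R,\delta}\to u$ in $L^{p^*_s}_a$ (as $R\to\infty$ and $\delta\to 0$) is dominated convergence, using the local integrability of $|x|^{-2ap^*_s/p}$ at the origin that follows from \eqref{range}. For the seminorm, write
\[
(u-u_{R,\delta})(x)-(u-u_{R,\delta})(y)=(u(x)-u(y))(1-\Psi_{R,\delta}(x))+u(y)(\Psi_{R,\delta}(y)-\Psi_{R,\delta}(x))
\]
and use $|A+B|^p\le 2^{p-1}(|A|^p+|B|^p)$. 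The ``bulk'' piece tends to $0$ by dominated convergence applied to the weighted Gagliardo integrand. For the ``commutator'' piece
\[
\iint\frac{|u(y)|^p\,|\Psi_{R,\delta}(x)-\Psi_{R,\delta}(y)|^p}{|x-y|^{n+sp}}\,\frac{dx\,dy}{|x|^a|y|^a},
\]
I would apply the weighted H\"older inequality with exponent $p^*_s/p$ in $y$ to peel off $\|u\|^p_{L^{p^*_s}_a}$, reducing the problem to an explicit integral depending only on $\Psi_{R,\delta}$ and the weights. A careful analysis using the Lipschitz bound $|\Psi_{R,\delta}(x)-\Psi_{R,\delta}(y)|\le C\min(1,|x-y|/R,|x-y|/\delta)$, the strict inequality in \eqref{range}, and the weighted Sobolev inequality from the introduction then yields the required vanishing.

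Once $u$ is supported in an annulus $A$ away from $0$ and $\infty$, set $u_\epsilon:=u\ast\rho_\epsilon$ with a standard mollifier $\rho_\epsilon$; for small $\epsilon$, $u_\epsilon\in C^\infty_0(\R^n)$ is supported in a slightly larger annulus. On such an annulus $|x|^{-a}$ is comparable to a constant, so $\|u-u_\epsilon\|_{L^{p^*_s}_a}\to 0$ is essentially standard mollification in $L^{p^*_s}$. For the seminorm, Jensen's inequality yields
\[
[u-u_\epsilon]^p_{\widetilde{W}^{s,p}_a}\le\int\rho_\epsilon(z)\,[u-u(\cdot-z)]^p_{\widetilde{W}^{s,p}_a}\,dz,
\]
which reduces the problem to continuity of translations in $\widetilde{W}^{s,p}_a$. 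By the change of variables $(x,y)\mapsto(x-z,y-z)$,
\[
[u(\cdot-z)]^p_{\widetilde{W}^{s,p}_a}=\iint\frac{|u(x)-u(y)|^p}{|x-y|^{n+sp}}\,\frac{dx\,dy}{|x+z|^a|y+z|^a},
\]
and the claimed continuity as $z\to 0$ follows from dominated convergence: because $u$ is supported away from the origin, $|x+z|^{-a}$ and $|y+z|^{-a}$ admit a uniform integrable bound on the support of the Gagliardo integrand. A diagonal extraction then produces the desired sequence in $C^\infty_0(\R^n)$.

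The main obstacle is precisely the non-translation-invariance of the weight, which complicates the commutator estimate in the truncation step and prevents a direct application of convolution inequalities in the mollification step. The role of truncating also near the origin (and not only at infinity) is to reduce the mollification step to an essentially unweighted setting, while the strict inequality $a<(n-sp)/2$ in \eqref{range} is what provides the integrability margin needed to push the commutator estimate through.
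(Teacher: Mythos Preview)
Your strategy---truncate to an annulus away from both $0$ and $\infty$, then mollify in an essentially unweighted setting---is genuinely different from the paper's, which truncates only at infinity (Lemma~\ref{lemma support}) and then builds substantial machinery (the averaged weight bounds and Lusin-type approximation of Sections~\ref{AVE}--\ref{AVE2}) to carry out the mollification with the singular weight still present. Your plan would bypass all of that if the two key steps went through, but neither does as written. In the commutator estimate, you propose to peel off the \emph{full} norm $\|u\|^p_{L^{p^*_s}_a}$ via H\"older in $y$, leaving an integral that depends only on $\Psi_{R,\delta}$ and the weights. But every ingredient there is homogeneous, and a substitution $(x,y)\mapsto(Rx,Ry)$ shows that this leftover is independent of $R$ (and likewise of $\delta$), so it cannot tend to zero; equivalently, testing against $u_R(y)=\phi(y/R)$ shows the ratio of the commutator to $\|u_R\|^p_{L^{p^*_s}_a}$ is a nonzero constant. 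What is actually required---and what the paper does in Lemma~\ref{lemma support}---is to first localize the integration to the set where the cutoff is active and extract the \emph{restricted} norm $\|u\|_{L^{p^*_s}_a(\R^n\setminus B_R)}$ (respectively $\|u\|_{L^{p^*_s}_a(B_{C\delta})}$ for the inner cutoff); the vanishing comes from the tail of $u$, not from the cutoff alone.

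The gap in the mollification step is more basic. You correctly reduce via Jensen to continuity of translations, $[u-u(\cdot-z)]_{\widetilde W^{s,p}_a}\to 0$, but the dominated-convergence argument you offer does not prove this: the formula you display is for $[u(\cdot-z)]$, not for the difference, and in any case for a merely measurable $u$ the integrand of $[u-u(\cdot-z)]^p$ need not converge pointwise as $z\to 0$, so the DCT hypothesis is unavailable. Nor is there a dominating function for the shifted weight, since the singularity of $|y+z|^{-a}$ moves with $z$ and the support of the Gagliardo integrand is \emph{not} bounded away from the origin in both variables (only one of $x,y$ need lie in $\supp u$). A way to rescue your outline is to split the double integral into $A'\times A'$, for a fixed larger annulus $A'\supset\supp u$, where the weight is two-sided bounded and one may invoke the classical (unweighted) continuity of translations in $L^p(\R^{2n})$ applied to $(u(x)-u(y))/|x-y|^{\frac{n}{p}+s}$, together with a tail piece (one variable outside $A'$, the other in $\supp u$) that is controlled by $\|u-u(\cdot-z)\|_{L^p(\R^n)}$; both then vanish by standard unweighted results. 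This is precisely the reduction that replaces the paper's Sections~\ref{AVE}--\ref{AVE2}, but it is not the argument you wrote.
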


We observe that Theorem~\ref{TH} comprises also the ``unweighted''
case~$a=0$
(though, in this setting, the proof can be radically simplified,
thanks to the translation invariance of the kernel, see e.g.~\cite{FSV}).
The result obtained in Theorem~\ref{TH} here plays also a crucial role
in~\cite{DMPS} to obtain sharp decay estimates of the solution of
a weighted equation near the singularities and at infinity.\medskip

For related results in weighted Sobolev spaces with integer exponents
see for instance~\cite{CS, kilpe, Bo, tolle} and the references therein.
\medskip

The paper is essentially self-contained and written in the most
accessible way. We tried to avoid
as much as possible any unnecessary complication arising
from the presence of the weights and to clearly explain all the technical
details of the arguments presented.\medskip

The paper is organized as follows. In Section \ref{sec:basic} we show 
a basic lemma that states that the space under consideration
is not trivial. 
In Section \ref{sec:compact} we show that we can perform an
approximation with compactly supported functions.

The approximation with smooth functions is, in general, more
difficult to obtain, due to the presence of weights that
are not translation invariant. More precisely,
the standard approximation techniques that rely on convolution
need to be carefully reviewed, since the arguments
based on the continuity under translations in the classical Lebesgue spaces
fail in this case. To overcome this type of difficulties,
in Section~\ref{AVE} we estimate the ``averaged'' error
produced by translations of the weights and we use this
estimate to control the norm of a mollification in terms
of the norm of the original function.

Then, in Section~\ref{AVE2}, we perform an approximation
with continuous functions, by carefully exploiting the Lusin's Theorem.
The approximation with smooth functions is proved
in Section \ref{sec:smooth}, by using all the ingredients
that were previously introduced.
Finally, Section \ref{sec:proof} is devoted to the proof of Theorem \ref{TH}.

\section{A basic lemma}\label{sec:basic}

In this section we consider a more general semi-norm 
and we show that it is bounded for functions in $C^\infty_0(\R^n)$. 
This remark shows that there is an interesting range of
parameters for which the space considered here is not trivial.

We take $\alpha,\beta\in\R$ such that 
\begin{equation}\label{cond ab}
-s p<\alpha,\beta<n \ {\mbox{ and }} \  \alpha +\beta <n,
\end{equation}
and we define
\begin{equation}\label{2.1bis} [u]_{\widetilde{W}^{s,p}_{\alpha,\beta}(\R^n)}
:=\iint_{\R^{2n}}\frac{|u(x)-u(y)|^p}{|x-y|^{n+s p}}\,\frac{dx}{|x|^\alpha}
\,\frac{dy}{|y|^\beta}.\end{equation}

\begin{lemma}\label{lemma 0}
Let $\varphi\in C^\infty_0(\R^n)$. Then there exists a positive constant $C$ such that
$$ [\varphi]_{\widetilde{W}^{s,p}_{\alpha,\beta}(\R^n)}\le C.$$
\end{lemma}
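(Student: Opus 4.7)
The plan is to use the compact support of $\varphi$ to reduce the double integral to bounded regions, and then handle the weighted singularities by elementary splitting arguments of Riesz-convolution type. Fix $R>0$ with $\supp \varphi\subset B_R$; since the integrand vanishes whenever both $x$ and $y$ lie outside $B_R$, it is natural to decompose $[\varphi]_{\widetilde{W}^{s,p}_{\alpha,\beta}(\R^n)}=I_1+I_2+I_3$, where $I_1$ is the integral over $B_{2R}\times B_{2R}$ and $I_2,I_3$ are the integrals over $B_R\times(\R^n\setminus B_{2R})$ and $(\R^n\setminus B_{2R})\times B_R$ respectively.

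The far-field pieces $I_2$ and $I_3$ are the easy ones. In $I_2$ one endpoint lies in the support of $\varphi$ and the other outside a doubled ball, so $|\varphi(x)-\varphi(y)|^p\le\|\varphi\|_\infty^p$ and $|x-y|\ge|y|/2$. Substituting, one reduces to the product $\int_{B_R}|x|^{-\alpha}\,dx\cdot\int_{|y|>2R}|y|^{-n-sp-\beta}\,dy$, which is finite thanks to $\alpha<n$ and $\beta>-sp$. The estimate for $I_3$ is symmetric.

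For $I_1$ I would further split according to whether $|x-y|>1$ or $|x-y|\le 1$. When $|x-y|>1$, the singular kernel is bounded by $1$ and $|\varphi(x)-\varphi(y)|\le 2\|\varphi\|_\infty$, reducing the problem to the product $\int_{B_{2R}}|x|^{-\alpha}\,dx\cdot\int_{B_{2R}}|y|^{-\beta}\,dy$, finite since $\alpha,\beta<n$. When $|x-y|\le 1$, the Lipschitz estimate $|\varphi(x)-\varphi(y)|\le\|\nabla\varphi\|_\infty|x-y|$ reduces matters to controlling
$$\iint_{(B_{2R}\times B_{2R})\cap\{|x-y|\le 1\}}\frac{dx\,dy}{|x-y|^{n-p(1-s)}\,|x|^\alpha\,|y|^\beta}.$$

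The main obstacle is this last three-singularity integral, in which the exponents of $|x-y|^{-1}$, $|x|^{-1}$ and $|y|^{-1}$ interact nontrivially. I would handle it by fixing $x$ and partitioning the $y$-domain into the three natural regions $\{|y|<|x|/2\}$, $\{|x|/2\le|y|\le 2|x|\}$, and $\{|y|>2|x|\}$, in each of which one of $|x-y|$ or $|y|$ can be replaced by a multiple of $|x|$. A direct computation then yields the inner bound $\int_{B_{2R}}|x-y|^{-(n-p(1-s))}|y|^{-\beta}\,dy\le C(1+|x|^{p(1-s)-\beta})$, with at most a logarithmic factor in the borderline case $p(1-s)=\beta$. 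Integrating the result against $|x|^{-\alpha}$ over $B_{2R}$ produces the two finiteness conditions $\alpha<n$ and $\alpha+\beta<n+p(1-s)$, both of which follow from the hypotheses $\alpha,\beta<n$ and $\alpha+\beta<n$ together with $s<1$, completing the estimate.
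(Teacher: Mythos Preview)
Your argument is correct and follows the same overall architecture as the paper's proof: split into a near-diagonal piece over a bounded product domain and far-field pieces where one variable is outside $B_{2R}$; treat the far pieces via $|\varphi(x)-\varphi(y)|\le\|\varphi\|_\infty$ and $|x-y|\gtrsim|y|$; treat the near piece via the Lipschitz bound $|\varphi(x)-\varphi(y)|\le\|\nabla\varphi\|_\infty|x-y|$.

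The genuine difference is in how you control the resulting weighted integral
\[
\iint_{B_{2R}\times B_{2R}}\frac{dx\,dy}{|x-y|^{n-p(1-s)}\,|x|^{\alpha}\,|y|^{\beta}}.
\]
The paper avoids your three-region Riesz analysis by first splitting on the \emph{signs} of $\alpha$ and $\beta$: if both are nonnegative, a symmetry trick replaces $|x|^{-\alpha}|y|^{-\beta}$ by $|x|^{-(\alpha+\beta)}$ (up to exchanging $x$ and $y$), after which the substitution $z=x-y$ makes the integral factor as $\int_{B_{2R}}|x|^{-(\alpha+\beta)}dx\cdot\int_{B_{4R}}|z|^{-(n-p(1-s))}dz$; the mixed and purely negative cases are even simpler. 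This sign-case/symmetry reduction sidesteps your inner partition into $\{|y|<|x|/2\}$, $\{|x|/2\le|y|\le 2|x|\}$, $\{|y|>2|x|\}$ and the accompanying borderline logarithm, at the cost of distinguishing the three sign configurations. Your route is slightly more systematic (no sign cases) and directly reusable for general Riesz-type convolutions, while the paper's is a bit cleaner in this specific setting. Your extra subdivision by $|x-y|\gtrless1$ is harmless but unnecessary: since both variables lie in $B_{2R}$, the Lipschitz bound already handles the whole near piece.
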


\begin{proof}
We take $\varphi\in C^\infty_0(\R^n)$ and we suppose that the support of $\varphi$ is the 
ball $B_R$ (for some $R>1$). Therefore, if $x,y\in \R^n\setminus
B_R$ then $\varphi(x)=\varphi(y)=0$, 
and so we can assume in the integral in~\eqref{2.1bis} for~$\varphi$
that $x\in B_R$, up to a factor~$2$, i.e. 
we have to estimate 
\begin{equation}\label{int0}
I\,= \,\iint_{B_R\times \R^n}\frac{|\varphi(x)-\varphi(y)|^p}{|x-y|^{n+s p}}\,\frac{dx}{|x|^\alpha}
\,\frac{dy}{|y|^\beta}\,=\, I_1 +I_2,
\end{equation}
where
\begin{eqnarray*}
 I_1&:=&\iint_{B_R\times B_{2R}}\frac{|\varphi(x)-\varphi(y)|^p}{|x-y|^{n+s p}}\,\frac{dx}{|x|^\alpha}
\,\frac{dy}{|y|^\beta} \\
{\mbox{ and }} I_2&: =& \iint_{B_R\times (\R^n\setminus B_{2R})}
\frac{|\varphi(x)-\varphi(y)|^p}{|x-y|^{n+s p}}\,\frac{dx}{|x|^\alpha}\,\frac{dy}{|y|^\beta}.
\end{eqnarray*}
We first estimate $I_1$: we have  
\begin{equation}\label{2.2bis}
I_1\le C\iint_{B_{2R}\times B_{2R}}\frac{|x-y|^p}{|x-y|^{n+s p}}\,\frac{dx}{|x|^\alpha}
\,\frac{dy}{|y|^\beta},\end{equation}
for some constant $C>0$ depending on the $C^1$-norm of $u$. 
Now, if $\alpha,\beta<0$ then $|x|^{-\alpha}\le (2R)^{-\alpha}$ and $|y|^{-\beta}\le (2R)^{-\beta}$.  
Therefore, by the change of variable $z=x-y$, we get 
$$ I_1 \le C\,(2R)^{-\alpha}(2R)^{-\beta}\int_{B_{2R}}dx
\int_{B_{2R}}dz\, |x-y|^{p-n-s p} \le C, $$
up to renaming $C$, that possibly depends on $R$. 

Now we suppose that $\alpha,\beta\ge0$. We claim that 
\begin{equation}\label{Iuno}
I_1\le C\,\iint_{B_{2R}\times B_{2R}}|x-y|^{p-n-s p}\,\frac{dx\,dy}{|x|^{\alpha+\beta}}.
\end{equation}
Indeed, if $|x|\le|y|$, then formula~\eqref{Iuno} trivially follows
from~\eqref{2.2bis}. 
On the other hand, if $|x|\ge|y|$, then 
$$ I_1\le C\,\iint_{B_{2R}\times B_{2R}}|x-y|^{p-n-s p}\,\frac{dx\,dy}{|y|^{\alpha+\beta}},$$
and so by symmetry we get \eqref{Iuno}. 

From \eqref{Iuno} we obtain that 
$$ I_1 \le  C\,\int_{B_{2R}}\frac{dx}{|x|^{\alpha+\beta}}\,
\int_{B_{4R}}\frac{dz}{|z|^{n+s p-p}}\le C, $$
thanks to \eqref{cond ab}, up to renaming~$C$.

Finally, we deal with the case $\alpha\ge0$ and $\beta\le0$ (the other situation is symmetric). 
Then, $|y|^{-\beta}\le (2R)^{-\beta}$, and so 
\begin{eqnarray*}
I_1 &\le & C\,(2R)^{-\beta} \iint_{B_{2R}\times B_{2R}}|x-y|^{p-n-s p}
\,\frac{dx\,dy}{|x|^{\alpha}} \\
& \le & C\,(2R)^{-\beta} \int_{B_{2R}}\frac{dx}{|x|^\alpha}\,
\int_{B_{4R}}|z|^{p-n-s p}\,dz \\
&\le & C,
\end{eqnarray*}
thanks to \eqref{cond ab}, up to relabelling $C$ (that depends also on $R$). 

Therefore, we have shown that for any $\alpha,\beta$ that satisfy~\eqref{cond ab} we have that
\begin{equation}\label{est 1}
I_1\le C,
\end{equation}
up to renaming the constant $C$. 

Now we estimate $I_2$. For this, we observe that if $x\in B_{R}$ and $y\in \R^n\setminus B_{2R}$ then 
$$ |x-y|\ge |y|-|x|=\frac{|y|}{2}+\frac{|y|}{2}-|x|\ge \frac{|y|}{2}.$$
Thus
\begin{eqnarray*}
I_2 &\le & 2^{n+s p}\,\left(2\|u\|_{L^\infty(\R^n)}\right)^p\,
\iint_{B_R\times (\R^n\setminus B_{2R})}\frac{1}{|y|^{n+s p}}
\,\frac{dx}{|x|^\alpha}\,\frac{dy}{|y|^\beta} \\
&\le & 2^{n+s p}\,\left(2\|u\|_{L^\infty(\R^n)}\right)^p\,
\int_{B_R}\frac{dx}{|x|^\alpha}\, \int_{\R^n\setminus B_{2R}}
\frac{dy}{|y|^{n+s p+\beta}}\\
&\le & C, 
\end{eqnarray*}
thanks to \eqref{cond ab}. Using this and \eqref{est 1} into \eqref{int0} 
we obtain that $I$ is bounded.
\end{proof}

As an obvious consequence of Lemma~\ref{lemma 0},
we have that~$C^\infty_0(\R^n)\subseteq{\widetilde{W}^{s,p}_a(\R^n)}$,
and so, by~\eqref{range}, we see that~$C^\infty_0(\R^n)\subseteq{\dot{W}^{s,p}_a(\R^n)}$.
This says that the approximation seeked by Theorem~\ref{TH}
is meaningful.

\section{Approximation with compactly supported functions}\label{sec:compact}

In this section we will prove that we can approximate 
a function in $\dot{W}^{s,p}_a(\R^n)$ with another function 
with compact support, by keeping the error small. 

\begin{lemma}\label{lemma support}
Let~$u\in\dot{W}^{s,p}_a(\R^n)$.
Let~$\tau\in C^\infty_0(B_2,[0,1])$ with~$\tau=1$ in~$B_1$,
and~$ \tau_j(x):=\tau (x/j)$. Then
$$ \lim_{j\to+\infty} \|u-\tau_j u\|_{\dot{W}^{s,p}_a(\R^n)}=0.$$
\end{lemma}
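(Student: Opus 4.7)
The plan is to split $\|u-\tau_j u\|_{\dot{W}^{s,p}_a(\R^n)}$ into its $L^{p^*_s}_a$-norm and its Gagliardo seminorm and handle each. The $L^{p^*_s}_a$-part is immediate from dominated convergence: $|u(x)|^{p^*_s}(1-\tau_j(x))^{p^*_s}|x|^{-2ap^*_s/p}$ vanishes pointwise (once $j>|x|$) and is dominated by the integrable function $|u(x)|^{p^*_s}|x|^{-2ap^*_s/p}$. For the seminorm I would exploit the algebraic identity
\[
[(1-\tau_j)u](x)-[(1-\tau_j)u](y)=(1-\tau_j(x))\bigl(u(x)-u(y)\bigr)+u(y)\bigl(\tau_j(y)-\tau_j(x)\bigr),
\]
combined with $(|A|+|B|)^p\le 2^{p-1}(|A|^p+|B|^p)$, to obtain $[(1-\tau_j)u]^p_{\widetilde{W}^{s,p}_a(\R^n)}\le 2^{p-1}(A_j+B_j)$ where
\[
A_j:=\iint_{\R^{2n}}(1-\tau_j(x))^p\,\frac{|u(x)-u(y)|^p}{|x-y|^{n+sp}}\,\frac{dx\,dy}{|x|^a|y|^a},\qquad B_j:=\iint_{\R^{2n}}|u(y)|^p\,\frac{|\tau_j(x)-\tau_j(y)|^p}{|x-y|^{n+sp}}\,\frac{dx\,dy}{|x|^a|y|^a}.
\]
Dominated convergence, with the natural majorant coming from $[u]^p_{\widetilde{W}^{s,p}_a(\R^n)}<+\infty$, then gives $A_j\to 0$.

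The delicate term is $B_j$, since no $j$-uniform majorant of $|\tau_j(x)-\tau_j(y)|^p$ produces a function that is integrable against $|u(y)|^p/(|x-y|^{n+sp}|x|^a|y|^a)$ for general $u$. My plan is a three-step argument. \emph{Step~1 (uniform bound):} symmetrize $B_j$ by swapping $x\leftrightarrow y$ to write $B_j=\int|u(x)|^p N_j(x)|x|^{-a}\,dx$ with $N_j(x):=\int |\tau_j(x)-\tau_j(y)|^p|x-y|^{-n-sp}|y|^{-a}\,dy$; the change of variable $y=j\eta$ yields the scaling $N_j(x)=j^{-sp-a}N_1(x/j)$. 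Apply H\"older with the critical conjugate pair $r=p^*_s/p$, $r'=n/(sp)$, splitting the weight as $|x|^{-a}=|x|^{-2a}\cdot|x|^{a}$, to obtain
\[
B_j\le\|u\|_{L^{p^*_s}_a(\R^n)}^p\left(\int_{\R^n}N_j(x)^{r'}|x|^{ar'}\,dx\right)^{1/r'};
\]
the rescaling $x=j\xi$ together with the criticality identity $spr'=n$ converts the last factor into the $j$-independent quantity $\bigl(\int N_1(\xi)^{r'}|\xi|^{ar'}\,d\xi\bigr)^{1/r'}$. Finiteness of this integral follows from splittings analogous to those in Lemma~\ref{lemma 0}, using the elementary decay $N_1(\xi)\le C|\xi|^{-(n+sp)}$ for $|\xi|\ge 4$ (since $\supp\tau\subset B_2$) and local boundedness of $N_1$ provided by the Lipschitz estimate on $\tau$.

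\emph{Step~2 (convergence for test functions):} for $v\in C^\infty_0(\R^n)$ with $\supp v\subset B_R$ and $j>R$, one has $\tau_j\equiv 1$ on $\supp v$, so the integrand of $B_j[v]$ reduces to $|v(y)|^p(1-\tau_j(x))^p$, supported in $|x|\ge j$, where $|x-y|\ge|x|/2$; the inner $x$-integral is then $O(j^{-sp-a})$, and $B_j[v]\le C_v\,j^{-sp-a}\to 0$. \emph{Step~3 (density):} the map $u\mapsto B_j[u]^{1/p}$ is sublinear (by the triangle inequality in $L^p(dx\,dy)$) and uniformly bounded by $C^{1/p}\|u\|_{L^{p^*_s}_a(\R^n)}$, so, since $C^\infty_0(\R^n)$ is dense in $L^{p^*_s}_a(\R^n)$ (standard, as the weight $|x|^{-2ap^*_s/p}$ is locally integrable by~\eqref{range}), a routine three-$\eps$ argument extends Step~2 to every $u\in L^{p^*_s}_a(\R^n)\supseteq\dot{W}^{s,p}_a(\R^n)$. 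The main obstacle is Step~1: one cannot simply bound $[\tau_j]^p_{\widetilde{W}^{s,p}_a(\R^n)}$, which diverges like $j^{n-sp-2a}$ under the hypothesis $a<(n-sp)/2$; the critical H\"older pair $(p^*_s/p,n/(sp))$ is precisely what restores a uniform estimate, mirroring the critical scaling of the weighted Sobolev inequality of~\cite{AB}.
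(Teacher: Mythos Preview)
Your argument is correct, and shares with the paper the initial decomposition into the easy piece $A_j$ (the paper's $J_j$, handled by dominated convergence) and the hard piece $B_j$ (the paper's $I_j$), as well as the crucial use of the critical H\"older pair $(p^*_s/p,\,n/(sp))$. The execution for $B_j$, however, is genuinely different. The paper applies H\"older jointly in $(x,y)$ after splitting $\R^n\times(\R^n\setminus B_j)$ into three regions $D_{j,0},D_{j,1},D_{j,2}$ and introducing auxiliary exponents $\sigma_0=s$, $\sigma_1\in(0,s)$, $\sigma_2\in(s,1)$; this yields directly the quantitative bound $I_j\le C\|u\|_{L^{p^*_s}_a(\R^n\setminus B_j)}^{(n-sp)/n}\to 0$, so no density argument is needed. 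Your route instead integrates out the $y$-variable to isolate the kernel $N_j$, exploits its exact scaling $N_j(x)=j^{-sp-a}N_1(x/j)$ to get a $j$-uniform bound $B_j\le C\|u\|_{L^{p^*_s}_a(\R^n)}^p$, and then upgrades to convergence via the soft combination of convergence on test functions plus density of $C^\infty_0$ in $L^{p^*_s}_a(\R^n)$ (a fact the paper establishes separately as Lemma~\ref{678dvfdh7654wedrft-s}, so there is no circularity). Your approach is structurally cleaner---no domain trichotomy, no auxiliary $\sigma_k$---while the paper's is more self-contained and delivers an explicit rate of convergence. One small remark: your claim that $N_1$ is locally bounded ``by the Lipschitz estimate on $\tau$'' is correct but slightly understated; near the origin one also needs that $\tau\equiv 1$ on $B_1$, which kills the potential interaction between the $|\xi-\eta|^{-n-sp}$ and $|\eta|^{-a}$ singularities.
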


\begin{proof}
We set~$\eta_j:=1-\tau_j$.
Then $u-\tau_j u=\eta_j u$, and~$\eta_j(x)-\eta_j(y)=
\tau_j(y)-\tau_j(x)$. Accordingly~$|u(x)-\tau_j u(x)|\le 2|u(x)|$
and so
\begin{equation}\label{898989789}
\lim_{j\to+\infty} \|u-\tau_j u\|_{L^{p^*_s}_a(\R^n)}=0,\end{equation}
by the Dominated Convergence Theorem.
Moreover,
$$ |\eta_j u(x)-\eta_j u(y)|\le
|\tau_j(x)-\tau_j(y)|\,|u(y)| + |u(x)-u(y)|\,\eta_j(x).$$
Also, we
observe that if both $x$ and~$y$ lie in~$B_j$, then~$\tau_j(x)=\tau_j(y)=1$.
Therefore
\begin{equation}\label{EEEE}\begin{split}
& \iint_{\R^{2n}} \frac{\big| (u-\tau_ju)(x)-(u-\tau_ju)(y)\big|^p}{
|x-y|^{n+sp}}\,\frac{dx}{|x|^a}\,\frac{dy}{|y|^a}
\\&\qquad\le 2 \iint_{\R^{n}\times (\R^n\setminus B_j)} 
\frac{\big| (u-\tau_ju)(x)-(u-\tau_ju)(y)\big|^p}{
|x-y|^{n+sp}}\,\frac{dx}{|x|^a}\,\frac{dy}{|y|^a}\\
&\qquad \le C\,(I_j + J_j),\end{split}\end{equation}
where
\begin{eqnarray*}
I_j &:=& \iint_{\R^{n}\times (\R^n\setminus B_j)} 
\frac{ |\tau_j(x)-\tau_j(y)|^p\,|u(y)|^p }{|x-y|^{n+sp}}\,\frac{dx}{|x|^a}\,\frac{dy}{|y|^a}\\
{\mbox{and }} J_j &:=&
\iint_{\R^{n}\times (\R^n\setminus B_j)}
\frac{ |u(x)-u(y)|^p\,\eta_j^p(x)}{|x-y|^{n+sp}}\,\frac{dx}{|x|^a}\,\frac{dy}{|y|^a}.
\end{eqnarray*}
We estimate these two terms separately. First of all,
we estimate~$I_j$. For this, we define
\begin{eqnarray*}
D_{j,0}:=\Big\{ (x,y)\in \R^{n}\times (\R^n\setminus B_j)
{\mbox{ s.t. }} |x|\le |y|/2\Big\},\\
D_{j,1}:= \Big\{ (x,y)\in \R^{n}\times (\R^n\setminus B_j)
{\mbox{ s.t. }} |x|> |y|/2 {\mbox{ and }}
|x-y|\ge j\Big\}\\
{\mbox{and }}D_{j,2}:= \Big\{ (x,y)\in \R^{n}\times (\R^n\setminus B_j)
{\mbox{ s.t. }}|x|> |y|/2 {\mbox{ and }}|x-y|< j \Big\},
\end{eqnarray*}
and we write, for $k\in\{0,1,2\}$,
$$ I_{j,k}:=
\iint_{D_{j,k}}
\frac{ |\tau_j(x)-\tau_j(y)|^p\,|u(y)|^p }{|x-y|^{n+sp}}\,\frac{dx}{|x|^a}\,\frac{dy}{|y|^a}.$$
Notice that
\begin{equation}\label{EQ00}
I_j =I_{j,0}+I_{j,1} + I_{j,2}.
\end{equation}
So we define~$\sigma_0:=s$, and we fix~$\sigma_1\in(0,s)$
and~$\sigma_2\in (s,1)$.
We write
$$ \frac{ |\tau_j(x)-\tau_j(y)|^p\,|u(y)|^p}{|x-y|^{n+sp}\,|x|^a\,|y|^{a}}=
\frac{ |\tau_j(x)-\tau_j(y)|^p}{|x-y|^{(s+\sigma_k)p}}
\cdot
\frac{ |u(y)|^p }{|x-y|^{n-\sigma_k p}\,|x|^a\,|y|^{a}}.$$
Thus we apply the H\"older inequality with exponents~$n/sp$
and~$p^*_s/p$
(which is in turn equal to~$n/(n-sp)$) and, for any~$k\in\{0,1,2\}$,
we obtain that
\begin{equation}\label{EQ3} 
\begin{split}
I_{j,k} \le \left[ \iint_{D_{j,k}}
\frac{ |\tau_j(x)-\tau_j(y)|^{\frac{n}{s}} }{|x-y|^{\frac{(s+\sigma_k)n}{s}}}
\,dx\,dy\right]^{\frac{sp}{n}}\cdot
\left[ \iint_{D_{j,k}}
\frac{ |u(y)|^{p^*_s} }{
|x-y|^{\frac{(n-\sigma_k p)n}{n-sp}}
|x|^{\frac{ap^*_s}{p}}
|y|^{\frac{ap^*_s}{p}}} 
\,dx\,dy\right]^{\frac{n-sp}{n}}.\end{split}\end{equation}
Now we change
variable~$X:=x/j$ and we see that
\begin{eqnarray*}
&& \iint_{D_{j,k}}
\frac{ |\tau_j(x)-\tau_j(y)|^{\frac{n}{s}} }
{|x-y|^{\frac{(s+\sigma_k)n}{s}} }
\,dx\,dy
=\iint_{D_{j,k}}
\frac{ |\tau(x/j)-\tau(y/j)|^{\frac{n}{s}} }
{|x-y|^{\frac{(s+\sigma_k)n}{s}}}
\,dx\,dy\\
&&\qquad
= j^{2n-\frac{(s+\sigma_k)n}{s}}
\iint_{D_{1,k}}
\frac{ |\tau(X)-\tau(Y)|^{\frac{n}{s}} }{
|X-Y|^{\frac{(s+\sigma_k)n}{s}} }
\,dX\,dY
=j^{\frac{(s-\sigma_k)n}{s}}
\iint_{D_{1,k}}
\frac{ |\tau(x)-\tau(y)|^{\frac{n}{s}} }{
|x-y|^{n+\sigma_k\frac{n}{s}}}
\,dx\,dy.
\end{eqnarray*}
That is, if we set~$P:=n/s$, we get that
\begin{equation}\begin{split}
\label{EQ4}
\iint_{D_{j,k}}
\frac{ |\tau_j(x)-\tau_j(y)|^{\frac{n}{s}} }
{|x-y|^{\frac{(s+\sigma_k)n}{s}} }
\,dx\,dy\le &\, j^{\frac{(s-\sigma_k)n}{s}} 
\| \tau\|_{\dot{W}^{\sigma_k,P}(\R^n)}\le 
C j^{\frac{(s-\sigma_k)n}{s}},\end{split}\end{equation}
where~$\dot{W}^{\sigma,P}(\R^n)$
is the usual Gagliardo semi-norm
(which coincides with~${\widetilde{W}^{\sigma,P}_a(\R^n)}$
with~$a=0$, see e.g.~\cite{guida}).

In addition, if~$(x,y)\in D_{j,0}$, we have that~$|x-y|\ge|y|-|x|\ge|y|/2$
and so
\begin{equation}\label{98sadvsfg4ewes}
\begin{split}
&\iint_{D_{j,0}}
\frac{ |u(y)|^{p^*_s} }{
|x-y|^{\frac{(n-\sigma_0 p)n}{n-sp}} |x|^{\frac{ap^*_s}{p}}
|y|^{\frac{ap^*_s}{p}}} \,dx\,dy 
\le
C\iint_{D_{j,0}}
\frac{ |u(y)|^{p^*_s} }{
|x|^{\frac{ap^*_s}{p}}
|y|^{\frac{(n-\sigma_0 p)n}{n-sp}+
\frac{ap^*_s}{p}}} \,dx\,dy \\
&\qquad\le C \int_{\R^n\setminus B_j}
\left[ \int_0^{|y|/2} \rho^{n-1-\frac{ap^*_s}{p}}
\frac{ |u(y)|^{p^*_s} }{ |y|^{ \frac{n(n-sp+a)}{n-sp} } } \,d\rho\right]\,dy
=
C \int_{\R^n\setminus B_j}
\frac{ |y|^{ \frac{n(n-sp-a)}{n-sp} }
|u(y)|^{p^*_s} }{ |y|^{ \frac{n(n-sp+a)}{n-sp} } }\,dy\\ &\qquad=
C \int_{\R^n\setminus B_j}\frac{
|u(y)|^{p^*_s} }{ |y|^{\frac{2ap^*_s}{p}}}\,dy
.\end{split}
\end{equation}
Moreover, if~$k\in\{1,2\}$,
using the change of variable~$z:=x-y$
(and integrating in~$y\in\R^n\setminus B_j$ separately), 
we see that
\begin{eqnarray*}
\iint_{D_{j,k}}
\frac{ |u(y)|^{p^*_s} }{
|x-y|^{\frac{(n-\sigma_k p)n}{n-sp}} |x|^{\frac{ap^*_s}{p}}
|y|^{\frac{ap^*_s}{p}}} \,dx\,dy &\le&
C \iint_{D_{j,k}}
\frac{ |u(y)|^{p^*_s} }{
|x-y|^{\frac{(n-\sigma_k p)n}{n-sp}}
|y|^{\frac{2ap^*_s}{p}}} \,dx\,dy \\
&=&C
\iint_{D_{j,k}}
\frac{ |u(y)|^{p^*_s} }{
|x-y|^{n+\frac{(s-\sigma_k)pn}{n-sp}}|y|^{\frac{2ap^*_s}{p}}} \,dx\,dy
\\ &\le& \left\{\begin{matrix}
C \|u\|_{L^{p^*_s}_a(\R^n\setminus B_j)}\, \displaystyle\int_{\R^n\setminus B_j}
\displaystyle\frac{ dz}{
|z|^{n+\frac{(s-\sigma_1)pn}{n-sp}}} & {\mbox{ if }} k=1,\\
\, \\
C \|u\|_{L^{p^*_s}_a(\R^n\setminus B_j)}\, \displaystyle\int_{B_j}
\displaystyle\frac{ dz}{
|z|^{n+\frac{(s-\sigma_2)pn}{n-sp}}} & {\mbox{ if }} k=2.
\end{matrix}
\right.
\end{eqnarray*}
Thus, recalling that~$\sigma_1<s<\sigma_2$, we conclude that,
for any~$k\in\{1,2\}$,
\begin{equation}\label{98sadvsfg4ewes2}
\iint_{D_{j,k}}
\frac{ |u(y)|^{p^*_s} }{
|x-y|^{\frac{(n-\sigma_k p)n}{n-sp}}|x|^{\frac{ap^*_s}{p}}
|y|^{\frac{ap^*_s}{p}}} \,dx\,dy
\le C\, \|u\|_{L^{p^*_s}_a(\R^n\setminus B_j)}\, j^{ \frac{(\sigma_k-s)pn}{n-sp}}.\end{equation}
As a matter of fact, in virtue of~\eqref{98sadvsfg4ewes},
and recalling that~$\sigma_0=s$,
we have that the above equation is valid also for~$k=0$.

So, for~$k\in\{0,1,2\}$,
we insert formulas~\eqref{98sadvsfg4ewes2}
and~\eqref{EQ4}
into~\eqref{EQ3} and we conclude that
$$ I_{j,k}\le C \big(j^{\frac{(s-\sigma_k)n}{s}}\big)^{\frac{sp}{n}}\,
\cdot \,\big(\|u\|_{L^{p^*_s}_a(\R^n\setminus B_j)}\,
j^{ \frac{(\sigma_k-s)pn}{n-sp}}
\big)^{\frac{n-sp}{n}}\le C\,\|u\|_{L^{p^*_s}_a(\R^n\setminus B_j)}^{\frac{n-sp}{n}}.
$$
Thus, by~\eqref{EQ00}, we obtain
\begin{equation}\label{EE8}
I_j\le C\,\|u\|_{L^{p^*_s}_a(\R^n\setminus B_j)}^{\frac{n-sp}{n}}\longrightarrow0
\qquad{\mbox{ as }}j\to+\infty.
\end{equation}
Now we consider~$J_j$. For this, we define
$$ \psi_j(x,y):=\chi_{\R^{n}\times (\R^n\setminus B_j)}(x,y)\,
\frac{ |u(x)-u(y)|^p\,\eta_j^p(x)}{|x-y|^{n+sp}|x|^a|y|^a}.$$
Notice that
$$ |\psi_j(x,y)|\le \frac{ |u(x)-u(y)|^p}{|x-y|^{n+sp}|x|^a|y|^a}\in
L^1(\R^{2n}),$$
thus, by the Dominated Convergence Theorem,
$$ J_j=\iint_{\R^{2n}} \psi_j(x,y)\,dx\,dy
\longrightarrow0
\qquad{\mbox{ as }}j\to+\infty.$$
This, \eqref{EEEE} and~\eqref{EE8} give that
\begin{equation*}
\iint_{\R^{2n}} \frac{\big| (u-\tau_ju)(x)-(u-\tau_ju)(y)\big|^p}{
|x-y|^{n+sp}|x|^a|y|^a}\,dx\,dy\longrightarrow0
\qquad{\mbox{ as }}j\to+\infty.
\end{equation*}
The latter formula and~\eqref{898989789}
give the desired result.
\end{proof}

\section{Estimates in average and control of the convolution}\label{AVE}

Here we perform some detailed estimate on the ``averaged''
effect of the weights under consideration.
Roughly speaking, the weights themselves are not translation
invariant, but we will be able to estimate the averaged effect
of the translations in a somehow uniform way.

{F}rom this, we will be able to control the norm of
the mollification by the norm of the original function,
and this fact will in turn play a crucial role
in the approximation with smooth functions performed in Section~\ref{sec:smooth}
(namely,
one will approximate first a given function in the space
with a continuous and compactly supported function,
so one will have to bound the convolution of
this difference in terms of the difference of the original functions).

Due to the presence of two types of weights, the arguments of
this part are quite technical, but we tried to explain all
the details in a clear and self-contained way.
We start with an averaged weighted estimate:

\begin{prop}\label{WE}
There exists~$C>0$ such that
$$ \sup_{r>0} \frac{1}{r^{n}} \int_{B_r} \frac{dz}{|x+z|^a |y+z|^a}\le
\frac{C}{|x|^a|y|^a},$$
for every~$x$, $y\in\R^n\setminus\{0\}$.
\end{prop}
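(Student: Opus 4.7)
The plan is to reduce the two-weight estimate to a one-weight integral bound via the Cauchy--Schwarz inequality. Concretely, I would first write
$$ \int_{B_r} \frac{dz}{|x+z|^a\,|y+z|^a} \le \left(\int_{B_r}\frac{dz}{|x+z|^{2a}}\right)^{1/2}\left(\int_{B_r}\frac{dz}{|y+z|^{2a}}\right)^{1/2}, $$
so that everything reduces to proving the single-weight bound
\begin{equation}\label{plan:onept}
\int_{B_r}\frac{dz}{|\xi+z|^{2a}}\le C\,\frac{r^{n}}{\max(|\xi|,r)^{2a}}
\end{equation}
for every $\xi\in\R^n\setminus\{0\}$ and every $r>0$, where $C=C(n,a)$.

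To establish \eqref{plan:onept} I would translate the variable by setting $w:=z+\xi$, which rewrites the left-hand side as $\int_{B_r(\xi)}|w|^{-2a}\,dw$, and then split into two regimes. If $r\le |\xi|/2$, then $|w|\ge |\xi|/2$ throughout $B_r(\xi)$, so the integrand is pointwise dominated by $2^{2a}|\xi|^{-2a}$ and integration gives a term of order $r^{n}|\xi|^{-2a}$, which matches the right-hand side of \eqref{plan:onept} since $\max(|\xi|,r)=|\xi|$ in this regime. If instead $r>|\xi|/2$, then $B_r(\xi)\subset B_{3r}(0)$; here the hypothesis \eqref{range} is used, since it forces $2a<n-sp<n$, making $|w|^{-2a}$ locally integrable. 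A polar coordinate computation then yields $\int_{B_{3r}}|w|^{-2a}\,dw = C\,r^{\,n-2a}$, which again matches the claim because $\max(|\xi|,r)\le 2r$.

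Feeding \eqref{plan:onept} (applied once with $\xi=x$ and once with $\xi=y$) back into the Cauchy--Schwarz step and dividing by $r^{n}$ gives
$$ \frac{1}{r^{n}}\int_{B_r}\frac{dz}{|x+z|^{a}|y+z|^{a}}\le \frac{C}{\max(|x|,r)^{a}\,\max(|y|,r)^{a}}\le \frac{C}{|x|^{a}|y|^{a}}, $$
and the bound is manifestly uniform in $r>0$, so I can take the supremum. The only subtle point to track is that \eqref{range} guarantees $2a<n$, which is exactly what makes the integral in the regime $r>|\xi|/2$ of \eqref{plan:onept} converge; once that integrability is ensured, the rest is a clean two-case analysis with a universal constant $C=C(n,a)$.
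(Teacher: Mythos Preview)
Your proof is correct. The key ingredients---Cauchy--Schwarz to decouple the two weights, followed by a two-case estimate of the single-weight integral $\int_{B_r}|\xi+z|^{-2a}\,dz$ according to whether $r\le|\xi|/2$ or $r>|\xi|/2$---are all sound, and the crucial integrability condition $2a<n$ is indeed guaranteed by~\eqref{range}.

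The paper's argument is organized differently: it partitions $B_r$ into four regions $D_0,D_1,D_2,D_3$ according to whether $|x+z|\ge|x|/2$ or $\le|x|/2$, and likewise for $y$. The regions $D_0,D_1,D_2$ are handled by direct pointwise bounds (on one or both factors), and only on $D_3$---where $z$ is simultaneously close to $-x$ and to $-y$---does the paper invoke Cauchy--Schwarz, together with the observation that $D_3\ne\varnothing$ forces $r\ge|x|/2$ and $r\ge|y|/2$. Your approach applies Cauchy--Schwarz at the outset and thereby collapses the four-region analysis into a single-weight problem with a clean two-case split; it is shorter and more streamlined. The paper's decomposition, on the other hand, isolates more transparently what happens in the ``mixed'' regions $D_1,D_2$ (where only one translate is small) without paying the Cauchy--Schwarz price there, which could in principle be useful if one cared about sharper constants or about asymmetric weights $|x|^{-\alpha}|y|^{-\beta}$; but for the statement at hand both approaches rely on exactly the same hypothesis $2a<n$ and yield the same conclusion.
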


\begin{proof} Fixed~$r>0$, consider the following four domains:
\begin{eqnarray*}
D_0 &:=& \left\{ z\in B_r {\mbox{ s.t. }} |x+z|\ge\frac{|x|}{2}
{\mbox{ and }} |y+z|\ge\frac{|y|}{2}\right\}, \\
D_1 &:=& \left\{ z\in B_r {\mbox{ s.t. }} |x+z|\le\frac{|x|}{2}
{\mbox{ and }} |y+z|\ge\frac{|y|}{2}\right\}, \\
D_2 &:=& \left\{ z\in B_r {\mbox{ s.t. }} |x+z|\ge\frac{|x|}{2}
{\mbox{ and }} |y+z|\le\frac{|y|}{2}\right\} \\
{\mbox{and }}\;
D_3 &:=& \left\{ z\in B_r {\mbox{ s.t. }} |x+z|\le\frac{|x|}{2}
{\mbox{ and }} |y+z|\le\frac{|y|}{2}\right\}.
\end{eqnarray*}
Then
\begin{equation}\label{KK0}
\int_{D_0} \frac{dz}{|x+z|^a |y+z|^a}\le
\int_{B_r} \frac{dz}{(|x|/2)^a (|y|/2)^a}\le
\frac{4^a\, |B_r|}{|x|^a|y|^a}.
\end{equation}
Now we observe that
\begin{equation}\label{AUX0}\begin{split}
&{\mbox{if there exists~$z\in B_r$
such that~$|x+z|\le\displaystyle\frac{|x|}{2}$,}}
\\ &\qquad{\mbox{then~$r\ge |z|\ge |x|-|x+z|
\ge\displaystyle\frac{|x|}{2}$.}}\end{split}
\end{equation}
{F}rom this, we observe that
if~$D_1\ne\varnothing$ it follows that~$r\ge |x|/2$ and so,
using the substitution~$\zeta:=x+z$,
\begin{equation}\label{KK1}
\begin{split}
&\int_{D_1} \frac{dz}{|x+z|^a |y+z|^a}\le
\frac{2^a}{|y|^a}\int_{B_r} \frac{dz}{|x+z|^a}
\le\frac{2^a}{|y|^a}\int_{B_{r+|x|}} \frac{d\zeta}{|\zeta|^a}
\\ &\quad\le
\frac{C_1 (r+|x|)^{n-a}}{|y|^a}\le \frac{C_2 (3r)^{n}}{(r+|x|)^a|y|^a}
\le \frac{C_2 (3r)^{n}}{|x|^a|y|^a}=\frac{C_3 r^{n}}{|x|^a|y|^a},
\end{split}\end{equation}
for some constants~$C_1$, $C_2$, $C_3>0$.
Similarly, by exchanging the roles of~$x$ and $y$,
we see that
\begin{equation}\label{KK2}
\int_{D_2} \frac{dz}{|x+z|^a |y+z|^a}\le
\frac{C_4 r^{n}}{|x|^a|y|^a}.
\end{equation}
Moreover, if~$D_3\ne\varnothing$, we deduce from~\eqref{AUX0}
(and the similar formula for~$y$) that
$$ r\ge\max\left\{ \frac{|x|}{2},\,\frac{|y|}{2}\right\}$$
and therefore
\begin{equation}\label{KK3}
\begin{split}&\int_{D_3} \frac{dz}{|x+z|^a |y+z|^a}\le
\sqrt{ \int_{B_r} \frac{dz}{|x+z|^{2a} } }
\,\sqrt{ \int_{B_r} \frac{dz}{|y+z|^{2a} } }
\\ &\quad\le 
\sqrt{ \int_{B_{r+|x|}} \frac{dz}{|\zeta|^{2a} } }
\,\sqrt{ \int_{B_{r+|y|}} \frac{dz}{|\zeta|^{2a} } }
\le C_5 \sqrt{(r+|x|)^{n-2a}}\,\sqrt{(r+|y|)^{n-2a}}
\\ &\quad=\frac{C_5 (r+|x|)^{n/2}\,(r+|y|)^{n/2}}{
(r+|x|)^a (r+|y|)^a}
\le \frac{C_5 (3r)^{n/2}\,(3r)^{n/2}}{ 
|x|^a|y|^a} =\frac{C_6r^n}{|x|^a|y|^a}.
\end{split}\end{equation}
Notice that we have used all over in the integrals
that~$a\le 2a <n$,
thanks to~\eqref{range}.

The desired result now follows by combining~\eqref{KK0},
\eqref{KK1}, \eqref{KK2}, \eqref{KK3} and the fact that~$B_r=D_0\cup D_1\cup D_2\cup D_3$.
\end{proof}

A simpler (but still useful for our purposes) version
of Proposition~\ref{WE} is the following:

\begin{prop}\label{WE-simple}
Let~$b:=\frac{2ap^*_s}{p}=\frac{2an}{n-sp}$. There exists~$C>0$ such that
$$ \sup_{r>0} \frac{1}{r^{n}} \int_{B_r} \frac{dz}{|x+z|^b}\le
\frac{C}{|x|^b},$$
for every~$x\in\R^n\setminus\{0\}$.
\end{prop}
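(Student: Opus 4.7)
The plan is to mimic the strategy used in Proposition~\ref{WE}, but since there is now only one weight to worry about, only two cases are needed instead of four. First I will record that the range condition $a\in[0,(n-sp)/2)$ from \eqref{range} immediately gives $2a<n-sp$, so
$$ b=\frac{2an}{n-sp}<n,$$
which is the integrability threshold I will repeatedly exploit.

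Next, for fixed $r>0$ and $x\in\R^n\setminus\{0\}$, I would split $B_r$ into the two complementary subsets
$$ E_0:=\left\{z\in B_r:|x+z|\ge \tfrac{|x|}{2}\right\},\qquad E_1:=\left\{z\in B_r:|x+z|< \tfrac{|x|}{2}\right\}.$$
On $E_0$ the integrand is directly controlled by $2^b/|x|^b$, so
$$ \int_{E_0}\frac{dz}{|x+z|^b}\le \frac{2^b|B_r|}{|x|^b}=\frac{C\,r^n}{|x|^b},$$
which is the desired estimate on this piece.

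For the contribution from $E_1$, I would argue as in \eqref{AUX0}: if $E_1\ne\varnothing$ then there exists $z\in B_r$ with $|x+z|\le|x|/2$, hence
$$ r\ge|z|\ge|x|-|x+z|\ge\tfrac{|x|}{2},$$
so $|x|\le 2r$. Using the substitution $\zeta:=x+z$ (which enlarges the ball by at most $|x|$) and $b<n$ for integrability near the origin,
$$ \int_{E_1}\frac{dz}{|x+z|^b}\le\int_{B_{r+|x|}}\frac{d\zeta}{|\zeta|^b}\le C_1(r+|x|)^{n-b}\le C_1(3r)^{n-b}\le \frac{C_2\,r^n}{|x|^b},$$
where in the last step I used $|x|\le 2r$ to rewrite $r^{n-b}=r^n/r^b\le 2^b r^n/|x|^b$. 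Combining the two pieces and dividing by $r^n$ yields the claim uniformly in $r>0$. The only delicate point — really the same one as in Proposition~\ref{WE} — is remembering to invoke \eqref{range} to guarantee $b<n$, so that the singular integral in the second case is finite.
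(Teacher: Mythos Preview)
Your proof is correct and follows essentially the same approach as the paper: the same two-region decomposition $E_0$, $E_1$ (the paper calls them $D_0$, $D_1$), the same use of~\eqref{range} to ensure $b<n$, and the same substitution $\zeta=x+z$ together with the observation $r\ge|x|/2$ to handle the singular region. The only cosmetic difference is in the final chain of inequalities on $E_1$: the paper writes $(r+|x|)^{n-b}=(r+|x|)^n/(r+|x|)^b\le(3r)^n/|x|^b$, whereas you first bound $(r+|x|)^{n-b}\le(3r)^{n-b}$ and then use $r^b\ge(|x|/2)^b$; both are equivalent.
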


\begin{proof} The proof is similar to the one of Proposition~\ref{WE},
just dropping the dependence in~$y$. We give the details for
the facility of the reader.
Fixed~$r>0$, consider the following two domains:
\begin{eqnarray*}
D_0 &:=& \left\{ z\in B_r {\mbox{ s.t. }} |x+z|\ge\frac{|x|}{2}
\right\}, \\
{\mbox{and }}\;
D_1 &:=& \left\{ z\in B_r {\mbox{ s.t. }} |x+z|\le\frac{|x|}{2}
\right\}.
\end{eqnarray*}
Then
\begin{equation}\label{KK0-simple}
\int_{D_0} \frac{dz}{|x+z|^b}\le
\int_{B_r} \frac{dz}{(|x|/2)^b}\le
\frac{2^b |B_r|}{|x|^b}.
\end{equation}
Now we observe that
if there exists~$z\in B_r$
such that~$|x+z|\le|x|/2$, then~$r\ge |z|\ge |x|-|x+z|
\ge|x|/2$. {F}rom this, we observe that
if~$D_1\ne\varnothing$ it follows that~$r\ge |x|/2$ and so
\begin{equation}\label{0uyfd8uhasdfghertyuioiuytf}
\int_{D_1} \frac{dz}{|x+z|^b}\le
\int_{B_{r+|x|}} \frac{d\zeta}{|\zeta|^b}
\le C_1 (r+|x|)^{n-b}=\frac{C_1(r+|x|)^n}{(r+|x|)^b}
\le \frac{C_1\,(3r)^n}{|x|^b}
\end{equation}
for some constant~$C_1>0$. We observe that we have used here above that~$b<n$,
thanks to~\eqref{range}.
Then, formulas~\eqref{0uyfd8uhasdfghertyuioiuytf}
and~\eqref{KK0-simple} imply the desired result.
\end{proof}

Now, we observe that, in this paper, two types
of ``different'' weighted norms appear all over, namely~\eqref{norm}
and~\eqref{UP}. In order to deal with both of them at the same
time, we introduce now an ``abstract'' notation, by working in~$\R^N$
(then, in our application, we will
choose either~$N=n$ or~$N=2n$).
Also, we will consider 
two functions~$\varpi:\R^n\to\R^N$
and~$\Theta:\R^N\to[0,+\infty]$.
The main assumption that we will take is that
\begin{equation}\label{basic}
\sup_{r>0} \frac{1}{r^{n}} \int_{B_r} \frac{dz}{\Theta\big(X+\varpi(z)\big)}\le
\frac{C}{\Theta(X)},
\end{equation}
for a suitable~$C>0$, for a.e.~$X\in\R^N$.
We point out that
the integral in~\eqref{basic} is always performed on an~$n$-dimensional
ball~$B_r$ (i.e., in that notation,~$z\in B_r\subset\R^n$),
but the point~$X$ lies in~$\R^N$ (and~$n$ and~$N$ may be different).

Concretely, in the light of Propositions~\ref{WE}
and~\ref{WE-simple}, we have that
\begin{equation}\label{basic2}
\begin{split}
&{\mbox{condition \eqref{basic} holds true when}}\\
&\qquad N=2n, \qquad \varpi(z)=(z,z),
\qquad\Theta(X)=|x|^a|y|^a, \qquad X=(x,y)\in\R^n\times\R^n,\\
&{\mbox{and when}}
\\ &\qquad N=n, \qquad \varpi(z)=z,\qquad
\Theta(x)=|x|^b,\qquad b=\frac{2ap^*_s}{p}.
\end{split}
\end{equation}
{F}rom~\eqref{basic}, we obtain
a useful bound on (a suitable variant of) 
the maximal function in $\R^n\times\R^n$:

\begin{lemma}\label{9dvfbgfn32456645}
Assume that condition~\eqref{basic} holds true. Let~$q>1$.
Let~$V$ be a measurable
function from~$\R^{N}$ to~$\R$. Then, for any~$r>0$,
\begin{equation}\label{9scdvf3efdvvas} \int_{\R^{N}}
\left[ \frac{1}{r^{n}} \int_{B_r} |V(X-\varpi(z))|\,dz\right]^q
\,\frac{dX}{\Theta(X)}
\le C\int_{\R^{N}}
\frac{|V(X)|^q}{\Theta(X)}\,dX,\end{equation}
for a suitable~$C>0$.
\end{lemma}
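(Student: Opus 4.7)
The plan is to apply Jensen's inequality inside the ball $B_r$, then swap integrals via Fubini, perform a translation change of variables, and finally invoke hypothesis \eqref{basic} directly.

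First, since $q>1$ and $|B_r|=\omega_n r^n$ with $\omega_n:=|B_1|$, Jensen's inequality applied to the convex function $t\mapsto t^q$ and the probability measure $|B_r|^{-1}\mathbf{1}_{B_r}\,dz$ yields
$$\left[\frac{1}{r^{n}}\int_{B_r}|V(X-\varpi(z))|\,dz\right]^q=\omega_n^q\left[\frac{1}{|B_r|}\int_{B_r}|V(X-\varpi(z))|\,dz\right]^q\le \frac{\omega_n^{q-1}}{r^{n}}\int_{B_r}|V(X-\varpi(z))|^q\,dz.$$
Integrating this pointwise bound in $X$ against $dX/\Theta(X)$ and using Fubini to exchange the $X$ and $z$ integrations (both integrands being nonnegative), I would obtain
$$\int_{\R^{N}}\left[\frac{1}{r^{n}}\int_{B_r}|V(X-\varpi(z))|\,dz\right]^q\frac{dX}{\Theta(X)}\le \frac{\omega_n^{q-1}}{r^{n}}\int_{B_r}\left[\int_{\R^{N}}\frac{|V(X-\varpi(z))|^q}{\Theta(X)}\,dX\right]dz.$$

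Next, for each fixed $z\in B_r$ I would perform the translation $Y:=X-\varpi(z)$, which has unit Jacobian, so the inner integral becomes
$$\int_{\R^{N}}\frac{|V(Y)|^q}{\Theta(Y+\varpi(z))}\,dY.$$
Substituting this back and swapping integration order once more by Fubini, the right-hand side rewrites as
$$\omega_n^{q-1}\int_{\R^{N}}|V(Y)|^q\left[\frac{1}{r^{n}}\int_{B_r}\frac{dz}{\Theta(Y+\varpi(z))}\right]dY.$$
At this point, the inner bracket is exactly the quantity controlled by hypothesis \eqref{basic}, which gives it a bound of $C/\Theta(Y)$ uniformly in $r$, so altogether
$$\int_{\R^{N}}\left[\frac{1}{r^{n}}\int_{B_r}|V(X-\varpi(z))|\,dz\right]^q\frac{dX}{\Theta(X)}\le C\,\omega_n^{q-1}\int_{\R^{N}}\frac{|V(Y)|^q}{\Theta(Y)}\,dY,$$
which is the desired estimate \eqref{9scdvf3efdvvas}.

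There is no real obstacle here; the proof is essentially a bookkeeping exercise combining Jensen, Fubini and the hypothesis. The only subtle point is to make sure the Fubini applications are legitimate, which is automatic from the nonnegativity of all integrands (so that Tonelli's theorem applies regardless of finiteness), and to note that the null set where $\Theta$ vanishes plays no role since integrals against $dX/\Theta(X)$ are understood as integrals over the set $\{\Theta<\infty,\ \Theta>0\}$.
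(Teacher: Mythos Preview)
Your proof is correct and follows essentially the same route as the paper's: the paper phrases the first step as a H\"older inequality with exponents $q$ and $q/(q-1)$, which is exactly your Jensen step, and then proceeds identically via Fubini, the translation $\widetilde X=X-\varpi(z)$, and the hypothesis~\eqref{basic}.
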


\begin{proof} We may suppose that the right hand side of~\eqref{9scdvf3efdvvas}
is finite, otherwise we are done.
We use the 
H\"older inequality with exponents $q$ and~$q/(q-1)$,
to see that
\begin{eqnarray*}
&&\frac{1}{r^{n}} \int_{B_r} |V(X-\varpi(z))|\,dz
\le \frac{1}{r^{n}} \left[\int_{B_r} |V(X-\varpi(z))|^q\,dz\right]^{1/q}
\left[\int_{B_r} 1\,dz\right]^{(q-1)/q}
\\&&\qquad= \frac{C_1}{r^{n/q}} \left[\int_{B_r} |V(X-\varpi(z))|^q\,dz
\right]^{1/q},\end{eqnarray*}
for some~$C_1>0$, and so, 
by~\eqref{basic}, and using the change of variable~$\widetilde X:=X-\varpi(z)$
over~$\R^N$, we obtain
\begin{eqnarray*}
&& \int_{\R^{N}}
\left[ \frac{1}{r^{n}} \int_{B_r} |V(X-\varpi(z))|\,dz\right]^q
\,\frac{dX}{\Theta(X)} \le \frac{C_1^q}{r^{n}}
\int_{\R^{N}}
\left[\int_{B_r} |V(X-\varpi(z))|^q\,dz\right]
\,\frac{dX}{\Theta(X)}
\\ &&\quad=
\frac{C_1^q}{r^{n}} \int_{B_r}
\left[\int_{\R^{N}} |V(X-\varpi(z))|^q \,\frac{dX}{\Theta(X)}
\right] \,dz
=
\frac{C_1^q}{r^{n}} \int_{B_r}
\left[\int_{\R^{N}} |V(\widetilde X)|^q \,
\frac{d\widetilde X}{\Theta(\varpi(z)+\widetilde X)}
\right] \,dz
\\ &&\quad= \frac{C_1^q}{r^{n}} 
\int_{\R^{N}} |V(\widetilde X)|^q
\left[\int_{B_r} 
\frac{dz}{\Theta(\varpi(z)+\widetilde X)}
\right] \,d\widetilde X
\le 
\int_{\R^{N}} 
|V(\widetilde X)|^q \,
\frac{C_2}{\Theta(\widetilde X)}
\,d\widetilde X
,\end{eqnarray*}
as desired.
\end{proof}

With the estimate in Lemma~\ref{9dvfbgfn32456645}, we are in the position of bounding
a (suitable variant of) the
standard mollification.
For this, we take a radially symmetric, radially decreasing
function $\eta_o\in C^\infty(\R^n)$,
with~$\eta\ge0$, $\supp\eta_o\subseteq B_1$ and
\begin{equation}\label{sadvbefewgfew}
\int_{\R^n}\eta_o(x)\,dx=1
\end{equation}
With a slight abuse of notation, we write~$\eta_o(r)=\eta_o(x)$
whenever~$|x|=r$.
Given a measurable
function~$v=v(x,y)$ from~$\R^{2n}$ to~$\R$, 
we also define
\begin{equation}\label{STARR}
v\star \eta_o(x,y):=\int_{\R^n} v(x-z,y-z)\,\eta_o(z)\,dz.\end{equation}
Then we have:

\begin{prop}\label{7scdvgr3fcedrefd}
For every measurable
function~$v=v(x,y)$ from~$\R^{2n}$ to~$\R$, we have that
$$ \iint_{\R^{2n}}
\frac{|v\star \eta_o(x,y)|^p}{ |x|^a|y|^a }\,dx\,dy
\le C\,
\iint_{\R^{2n}}
\frac{|v(x,y)|^p}{ |x|^a|y|^a}\,dx\,dy,$$
for a suitable~$C>0$.
\end{prop}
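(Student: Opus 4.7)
The plan is to apply Lemma~\ref{9dvfbgfn32456645} in the instance described by the first half of~\eqref{basic2}, i.e.\ with~$N=2n$, $\varpi(z)=(z,z)$, $\Theta(X)=|x|^a|y|^a$ for~$X=(x,y)$, and~$q=p$. In that framework, Lemma~\ref{9dvfbgfn32456645} controls, for each fixed~$r>0$, the ``ball-average'' operator
$$ M_r v(x,y):=\frac{1}{r^n}\int_{B_r}|v(x-z,y-z)|\,dz,$$
so what is needed is a pointwise domination of~$v\star\eta_o$ by a finite-mass superposition of the~$M_r v$. For this, I would exploit the fact that~$\eta_o$ is radially decreasing and supported in~$B_1$ to write the layer-cake representation
$$ \eta_o(z)=\int_{|z|}^{+\infty}\bigl(-\eta_o'(r)\bigr)\,dr
=\int_0^{+\infty}\bigl(-\eta_o'(r)\bigr)\,\chi_{B_r}(z)\,dr,$$
with~$-\eta_o'\ge0$. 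Plugging this into~\eqref{STARR} and applying Tonelli then yields the pointwise estimate
$$ |v\star\eta_o(x,y)|
\le\int_0^{+\infty}\bigl(-\eta_o'(r)\bigr)\,r^n\,M_r v(x,y)\,dr.$$

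A short computation in polar coordinates, together with~\eqref{sadvbefewgfew}, shows that the measure~$d\mu(r):=(-\eta_o'(r))\,r^n\,dr$ is finite on~$(0,+\infty)$, with total mass~$c_n:=n/|S^{n-1}|$. Hence, by Jensen's inequality applied to the probability measure~$c_n^{-1}\,d\mu$,
$$ |v\star\eta_o(x,y)|^p\le c_n^{p-1}\int_0^{+\infty}\bigl(-\eta_o'(r)\bigr)\,r^n\,\bigl(M_r v(x,y)\bigr)^p\,dr.$$
Integrating against~$|x|^{-a}|y|^{-a}\,dx\,dy$, swapping the order of integration by Tonelli, and applying Lemma~\ref{9dvfbgfn32456645} for each fixed~$r$, one obtains
$$ \iint_{\R^{2n}}\frac{|v\star\eta_o|^p}{|x|^a|y|^a}\,dx\,dy
\le C\,c_n^{p-1}\!\int_0^{+\infty}\!\bigl(-\eta_o'(r)\bigr)\,r^n\,dr
\iint_{\R^{2n}}\frac{|v|^p}{|x|^a|y|^a}\,dx\,dy,$$
and the inner~$r$-integral equals~$c_n$, which finishes the proof after renaming~$C$.

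Once the abstract machinery of Lemma~\ref{9dvfbgfn32456645} is at hand (so that the weighted averaged bound of Proposition~\ref{WE} is already absorbed into the statement), no step of the argument is a genuine obstacle. The only slightly delicate point is the layer-cake decomposition that reduces the smooth convolution with the non-translation-invariant weight into an average of characteristic functions of balls, allowing Lemma~\ref{9dvfbgfn32456645} to be invoked in a form-preserving way; thereafter everything is Tonelli and Jensen.
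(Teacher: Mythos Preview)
Your proof is correct and follows essentially the same route as the paper's: both arguments reduce $|v\star\eta_o|$ to the superposition $\int_0^1(-\eta_o'(r))\,r^n\,M_r v\,dr$ (you via the layer-cake formula, the paper via polar coordinates and integration by parts, which is the same computation), then control the weighted $L^p$-norm by invoking Lemma~\ref{9dvfbgfn32456645} for each fixed~$r$ and using the finiteness of $\int_0^1 r^n|\eta_o'(r)|\,dr$ from~\eqref{789sdfqfgft4tysdfhjk}. The only cosmetic difference is that the paper pulls the norm inside the $r$-integral via Minkowski's integral inequality, whereas you do it pointwise via Jensen; both are entirely standard and yield the same conclusion.
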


\begin{proof} The argument is a careful modification of
the one on pages~63--65 of~\cite{stein}. 
First of all, we use an integration by parts to notice that
\begin{equation}\label{789sdfqfgft4tysdfhjk}
\int_0^1 r^n \,|\eta_o'(r)|\,dr
= -\int_0^1 r^n \,\eta_o'(r)\,dr
= n \int_0^1 r^{n-1} \,\eta_o(r)\,dr = C_0
\int_{B_1}\eta_o(x)\,dx=C_0,
\end{equation}
for some~$C_0>0$,
due to~\eqref{sadvbefewgfew}.
We define
\begin{eqnarray*}
\lambda(r,x,y)&:=& r^{n-1}
\int_{S^{n-1}} |v(x-r\omega,y-r\omega)|\,d{\mathcal{H}}^{n-1}(\omega)\\
{\mbox{and }}\;
\Lambda(r,x,y)&:=& \int_{B_r}|v(x-z,y-z)|\,dz.
\end{eqnarray*}
Now we use Lemma~\ref{9dvfbgfn32456645}
with~$N:=2n$, $\varpi(z):=(z,z)$, $X:=(x,y)$, $\Theta(X):=|x|^a|y|^a$, $q:=p$
and~$V(X):=v(x,y)$,
see~\eqref{basic2}. In this way we obtain that
\begin{equation}\label{etrherger442}\begin{split}\iint_{\R^{2n}}
\left[ \frac{\Lambda(r,x,y)}{r^{n}} \right]^p
\,\frac{dx\,dy}{|x|^a|y|^a}
\le C_1 \iint_{\R^{2n}}
\frac{|v(x,y)|^p}{|x|^a|y|^a}\,dx\,dy,\end{split}\end{equation}
for some~$C_1>0$.
Moreover, by polar coordinates,
\begin{eqnarray*}\Lambda(r,x,y)&=&C_2 \int_0^r \left[\rho^{n-1}
\int_{S^{n-1}} |v(x-\rho\omega,y-\rho\omega)|
\,d{\mathcal{H}}^{n-1}(\omega)\right]\,d\rho\\&=&
C_2 \int_0^r \lambda(\rho,x,y)\,d\rho,\end{eqnarray*}
and therefore
$$ \frac{\partial}{\partial r }\Lambda(r,x,y)=C_2
\lambda(r,x,y).$$
Notice also that~$\Lambda(0,x,y)=0=\eta_o(1)$.
Consequently, using again polar coordinates and an integration
by parts, we obtain
\begin{equation}\label{9sdsfghedvfdfgadrsg}\begin{split}
&|v\star \eta_o(x,y)|\le
\int_{B_1} |v(x-z,y-z)|\,\eta_o(z)\,dz \\
&\quad= C_3 \int_0^1 \left[ \int_{S^{n-1}} r^{n-1}
|v(x-r\omega,y-r\omega)|\,\eta_o(r) \,d{\mathcal{H}}^{n-1}(\omega)\right]\,dr = C_3
\int_0^1  \lambda(r,x,y)\,\eta_o(r)\,dr\\
&\quad= C_4 \int_0^1  \frac{\partial\Lambda}{\partial r }(r,x,y)\,\eta_o(r)\,dr
= -C_4
\int_0^1  \Lambda (r,x,y)\,\eta_o'(r)\,dr.\end{split}\end{equation}
We recall that~$\eta_o'\le0$, so the latter term is indeed non-negative.
Now we use the Minkowski integral inequality (see e.g. Appendix A.1
in~\cite{stein}): this gives that, for a given~$F=F(r,x,y)$,
and~$d\mu(x,y):=\frac{dx\,dy}{|x|^a|y|^a}$,
we have
\[ \left[\iint_{\R^{2n}}\left[ \int_0^1 |F(r,x,y)|\,dr\right]^p\,d\mu(x,y)
\right]^{1/p}\le \int_0^1 
\left[ \iint_{\R^{2n}}
|F(r,x,y)|^p\,d\mu(x,y) \right]^{1/p}\,dr.\]
Using this with~$F(r,x,y):=\Lambda (r,x,y)\,\eta_o'(r)$ and recalling~\eqref{9sdsfghedvfdfgadrsg},
we conclude that
\begin{eqnarray*}
\left[ \iint_{\R^{2n}}
\frac{|v\star \eta_o(x,y)|^p}{ |x|^a|y|^a }\,dx\,dy \right]^{1/p}
&\le& C_5\left[ \iint_{\R^{2n}}
\left[
\int_0^1  \Lambda (r,x,y)\,|\eta_o'(r)|\,dr\right]^p
\frac{dx\,dy}{ |x|^a|y|^a } \right]^{1/p}\\&\le& C_5
\int_0^1  
\left[ \iint_{\R^{2n}}
|\Lambda (r,x,y)|^p\,|\eta_o'(r)|^p\,
\frac{dx\,dy}{ |x|^a|y|^a }
\right]^{1/p}\,dr \\&=&
C_5
\int_0^1
\left[ \iint_{\R^{2n}}
\left[\frac{\Lambda (r,x,y)}{r^n}\right]^p
\frac{dx\,dy}{ |x|^a|y|^a }
\right]^{1/p} r^n \,|\eta_o'(r)|\,dr 
.\end{eqnarray*}
Therefore, recalling~\eqref{etrherger442},
\begin{eqnarray*}
\left[ \iint_{\R^{2n}}
\frac{|v\star \eta_o(x,y)|^p}{ |x|^a|y|^a }\,dx\,dy \right]^{1/p}&\le&
C_6
\int_0^1
\left[ 
\iint_{\R^{2n}}
\frac{|v(x,y)|^p}{|x|^a|y|^a}\,dx\,dy
\right]^{1/p} r^n \,|\eta_o'(r)|\,dr .\end{eqnarray*}
This and~\eqref{789sdfqfgft4tysdfhjk}
give the desired result.
\end{proof}

A simpler, but still useful, version of Proposition~\ref{7scdvgr3fcedrefd}
holds for the standard convolution of a function~$u:\R^n\to\R$, i.e.
$$ u*\eta_o(x):=\int_{\R^n} u(x-z)\,\eta_o(z)\,dz.$$
The reader may compare the latter formula
with~\eqref{STARR}. In this more standard
setting, we have:

\begin{prop}\label{7scdvgr3fcedrefd-s}
Let~$b:=\frac{2ap^*_s}{p}$.
For every measurable
function~$u$ from~$\R^{n}$ to~$\R$, we have that
$$ \int_{\R^{n}}
\frac{|u*\eta_o(x)|^{p^*_s} }{ |x|^b}\,dx
\le C\,
\int_{\R^{n}}
\frac{|u(x)|^{p^*_s}}{ |x|^b}\,dx,$$
for a suitable~$C>0$.
\end{prop}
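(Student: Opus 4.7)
The plan is to carry out, in the one-variable setting, exactly the sequence of steps used in the proof of Proposition~\ref{7scdvgr3fcedrefd}, replacing the weight $|x|^a|y|^a\,dx\,dy$ on $\R^{2n}$ by the weight $|x|^b\,dx$ on $\R^n$, and replacing the exponent $p$ by $p^*_s$. The key point is that Proposition~\ref{WE-simple} says precisely that condition~\eqref{basic} is satisfied with the choices $N=n$, $\varpi(z)=z$, $\Theta(x)=|x|^b$, as already recorded in~\eqref{basic2}; so the machinery of Lemma~\ref{9dvfbgfn32456645} is available out of the box.

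First, I would invoke Lemma~\ref{9dvfbgfn32456645} with $N=n$, $\varpi(z)=z$, $X=x$, $\Theta(x)=|x|^b$, $q=p^*_s$, and $V=u$, obtaining, uniformly in $r>0$,
$$
\int_{\R^n}\left[\frac{\Lambda(r,x)}{r^n}\right]^{p^*_s}\frac{dx}{|x|^b}
\le C\int_{\R^n}\frac{|u(x)|^{p^*_s}}{|x|^b}\,dx,
\qquad\text{where }\Lambda(r,x):=\int_{B_r}|u(x-z)|\,dz.
$$
This is the analogue of~\eqref{etrherger442} in the present setting.

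Second, by polar coordinates, writing $\lambda(r,x):=r^{n-1}\int_{S^{n-1}}|u(x-r\omega)|\,d\mathcal{H}^{n-1}(\omega)$, one has $\partial_r\Lambda(r,x)=C_2\lambda(r,x)$ and $\Lambda(0,x)=0=\eta_o(1)$. Since $\supp\eta_o\subseteq B_1$, an integration by parts (identical to the one leading to~\eqref{9sdsfghedvfdfgadrsg}) yields
$$
|u*\eta_o(x)|\le C_3\int_0^1 \lambda(r,x)\,\eta_o(r)\,dr
= -C_4\int_0^1 \Lambda(r,x)\,\eta_o'(r)\,dr,
$$
where the right-hand side is non-negative because $\eta_o$ is radially decreasing.

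Third, I would apply the Minkowski integral inequality with the measure $d\mu(x):=dx/|x|^b$ and the integrand $F(r,x):=\Lambda(r,x)\,\eta_o'(r)$, multiply and divide by $r^n$ inside, and then insert the bound from the first step; this gives
$$
\left[\int_{\R^n}\frac{|u*\eta_o(x)|^{p^*_s}}{|x|^b}\,dx\right]^{1/p^*_s}
\le C\left[\int_{\R^n}\frac{|u(x)|^{p^*_s}}{|x|^b}\,dx\right]^{1/p^*_s}
\int_0^1 r^n\,|\eta_o'(r)|\,dr.
$$
Finally, the integration-by-parts identity $\int_0^1 r^n|\eta_o'(r)|\,dr=C_0<+\infty$ (exactly as in~\eqref{789sdfqfgft4tysdfhjk}, using the normalization~\eqref{sadvbefewgfew}) closes the argument. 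I do not anticipate any serious obstacle: the whole proof is a faithful transcription of that of Proposition~\ref{7scdvgr3fcedrefd}, and the only thing to double-check is that the hypothesis $b<n$ (needed to apply Proposition~\ref{WE-simple}) holds in the admissible range~\eqref{range}, which is immediate from the definition $b=2ap^*_s/p=2an/(n-sp)$ and $a<(n-sp)/2$.
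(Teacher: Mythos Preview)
Your proposal is correct and follows essentially the same approach as the paper's own proof: both invoke Lemma~\ref{9dvfbgfn32456645} with $N=n$, $\varpi(z)=z$, $\Theta(x)=|x|^b$, $q=p^*_s$, then use polar coordinates, integration by parts, the Minkowski integral inequality, and the identity~\eqref{789sdfqfgft4tysdfhjk}. Your remark that $b<n$ follows from~\eqref{range} is the one structural check needed, and it is correct.
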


\begin{proof} The argument is a 
simplification of the one given for
Proposition~\ref{7scdvgr3fcedrefd}. For the convenience of the reader,
we provide all the details.
We define
\begin{eqnarray*}
\lambda(r,x)&:=& r^{n-1}
\int_{S^{n-1}} |u(x-r\omega)|\,d{\mathcal{H}}^{n-1}(\omega)\\
{\mbox{and }}\;
\Lambda(r,x)&:=& \int_{B_r}|u(x-z)|\,dz.
\end{eqnarray*}
Here we use Lemma~\ref{9dvfbgfn32456645}
with~$N:=n$, $\varpi(z):=z$, $X:=x$, $\Theta(X):=|x|^b$, $q:={p^*_s}$
and~$V(X):=u(x)$,
see~\eqref{basic2}. In this way we obtain that
\begin{equation}\label{etrherger442-s}\begin{split}\int_{\R^{n}}
\left[ \frac{\Lambda(r,x)}{r^{n}} \right]^{p^*_s}
\,\frac{dx}{|x|^b}
\le C_1 \int_{\R^{n}}
\frac{|u(x)|^{p^*_s}}{|x|^b}\,dx,\end{split}\end{equation}
for some~$C_1>0$.
Moreover, by polar coordinates,
$$ \Lambda(r,x)=C_2 \int_0^r \left[\rho^{n-1}
\int_{S^{n-1}} |u(x-\rho\omega)|
\,d{\mathcal{H}}^{n-1}(\omega)\right]\,d\rho=
C_2 \int_0^r \lambda(\rho,x)\,d\rho,$$
and therefore
$$ \frac{\partial}{\partial r }\Lambda(r,x)=C_2
\lambda(r,x).$$
Notice also that~$\Lambda(0,x)=0=\eta_o(1)$.
Consequently, using again polar coordinates and an integration
by parts, we obtain
\begin{equation*}
\begin{split}
&|u*\eta_o(x)|\le
\int_{B_1} |u(x-z)|\,\eta_o(z)\,dz 
= C_3 \int_0^1 \left[ \int_{S^{n-1}} r^{n-1}
|u(x-r\omega)|\,\eta_o(r)\,d{\mathcal{H}}^{n-1}(\omega)\right]\,dr \\
&\qquad= C_3
\int_0^1  \lambda(r,x)\,\eta_o(r)\,dr
= C_4 \int_0^1  \frac{\partial\Lambda}{\partial r }(r,x)\,\eta_o(r)\,dr
= -C_4
\int_0^1  \Lambda (r,x)\,\eta_o'(r)\,dr.\end{split}\end{equation*}
Now we use the Minkowski integral inequality (see e.g. Appendix A.1
in~\cite{stein})
and we conclude that
\begin{eqnarray*}
&& \left[ \int_{\R^{n}}
\frac{|u*\eta_o(x)|^{p^*_s}}{ |x|^b }\,dx \right]^{1/ {p^*_s}}
\le C_5\left[ \int_{\R^{n}}
\left[
\int_0^1  \Lambda (r,x)\,|\eta_o'(r)|\,dr\right]^{p^*_s}
\frac{dx}{ |x|^b } \right]^{1/ {p^*_s}}\\&&\qquad\le C_5
\int_0^1  
\left[ \int_{\R^{n}}
|\Lambda (r,x)|^{p^*_s}\,|\eta_o'(r)|^{p^*_s}\,
\frac{dx}{ |x|^b }
\right]^{1/ {p^*_s}}\,dr =
C_5
\int_0^1
\left[ \int_{\R^{n}}
\left[\frac{\Lambda (r,x)}{r^n}\right]^{p^*_s}
\frac{dx}{ |x|^b }
\right]^{1/ {p^*_s}} r^n \,|\eta_o'(r)|\,dr 
.\end{eqnarray*}
So, recalling~\eqref{etrherger442-s},
\begin{eqnarray*} \left[ \int_{\R^{n}}
\frac{|u* \eta_o(x)|^{p^*_s}}{ |x|^b }\,dx\right]^{1/ {p^*_s}} &\le&
C_6
\int_0^1
\left[ 
\int_{\R^{n}}
\frac{|u(x)|^{p^*_s}}{|x|^b}\,dx
\right]^{1/{p^*_s}} r^n \,|\eta_o'(r)|\,dr.
\end{eqnarray*}
{F}rom this and~\eqref{789sdfqfgft4tysdfhjk}
we obtain the desired result.
\end{proof}

\section{Approximation in weighted Lebesgue spaces with
continuous functions}\label{AVE2}

In order to deal with
the semi-norm in~\eqref{norm}, it is often convenient
to introduce a weighted norm over~$\R^{2n}$, by proceeding as follows.
Given a measurable
function~$v=v(x,y)$ from~$\R^{2n}$ to~$\R$, we define
\begin{equation}\label{norm-p-2n}
\| v\|_{L^p_{a,a}(\R^{2n})}:=
\left(\iint_{\R^{2n}} |v(x,y)|^p\,\frac{dx}{|x|^a}
\,\frac{dy}{|y|^a}\right)^{1/p}.
\end{equation}
When~$\| v\|_{L^p_{a,a}(\R^{2n})}$ is finite, we say that~$v$ belongs
to~$L^p_{a,a}(\R^{2n})$.
Notice that 
\begin{equation}\label{9as8dfb0oijh4trdskjjhh}
\begin{split}
&{\mbox{if~$v^{(u)}(x,y):=\displaystyle\frac{u(x)-u(y)}{|x-y|^{\frac{n}{p}+s}}$, then
formula \eqref{norm-p-2n} reduces to~\eqref{norm},}}\\
& {\mbox{namely }} \; \| v^{(u)}\|_{L^p_{a,a}(\R^{2n})}=
[u]_{\widetilde{W}^{s,p}_a(\R^n)}.\end{split}\end{equation} 
Now we give two approximation results
(namely Lemmata~\ref{678dvfdh7654wedrft0}
and~\ref{678dvfdh7654wedrft})
with respect to the norm in~\eqref{norm-p-2n}.

\begin{lemma}\label{678dvfdh7654wedrft0}
Let~$v\in L^p_{a,a}(\R^{2n})$.
Then there exists a sequence of
functions~$v_M\in L^p_{a,a}(\R^{2n})\cap L^\infty(\R^{2n})$
such that~$\| v-v_M\|_{L^p_{a,a}(\R^{2n})}\to0$ as $M\to+\infty$.
\end{lemma}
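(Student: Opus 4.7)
The plan is a standard truncation argument combined with the Dominated Convergence Theorem, adapted to the weighted measure $d\mu(x,y) := \frac{dx\,dy}{|x|^a|y|^a}$ on $\R^{2n}$.

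First, I would define the truncations explicitly by setting
\[
v_M(x,y) := \max\bigl\{-M,\,\min\{M,\,v(x,y)\}\bigr\},
\]
for every $M\in\N$. By construction, $|v_M(x,y)| \le M$ for all $(x,y)\in\R^{2n}$, so $v_M\in L^\infty(\R^{2n})$. Moreover, $|v_M(x,y)|\le |v(x,y)|$ pointwise, which immediately yields $\|v_M\|_{L^p_{a,a}(\R^{2n})}\le\|v\|_{L^p_{a,a}(\R^{2n})}<+\infty$, hence $v_M\in L^p_{a,a}(\R^{2n})$ as well.

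Next I would verify pointwise convergence and produce a dominating function. For any fixed $(x,y)$ with $|v(x,y)|<+\infty$, one has $v_M(x,y)=v(x,y)$ as soon as $M\ge |v(x,y)|$; therefore $v_M(x,y)\to v(x,y)$ pointwise a.e.\ as $M\to+\infty$, and consequently
\[
\frac{|v(x,y)-v_M(x,y)|^p}{|x|^a|y|^a}\longrightarrow 0 \qquad\text{a.e. in }\R^{2n}.
\]
For the dominating function, one notes that on the set $\{|v|\le M\}$ the difference $v-v_M$ vanishes, while on $\{|v|>M\}$ one has $|v-v_M|=|v|-M\le|v|$; thus $|v-v_M|\le|v|$ pointwise, and
\[
\frac{|v(x,y)-v_M(x,y)|^p}{|x|^a|y|^a}\le \frac{|v(x,y)|^p}{|x|^a|y|^a},
\]
where the right-hand side is integrable on $\R^{2n}$ precisely by the hypothesis $v\in L^p_{a,a}(\R^{2n})$.

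Finally, I would invoke the Dominated Convergence Theorem (with respect to the weighted measure $d\mu$, or equivalently with respect to Lebesgue measure after absorbing the weight into the integrand) to conclude
\[
\|v-v_M\|_{L^p_{a,a}(\R^{2n})}^{\,p}=\iint_{\R^{2n}}\frac{|v(x,y)-v_M(x,y)|^p}{|x|^a|y|^a}\,dx\,dy\longrightarrow 0
\]
as $M\to+\infty$, which is the claim. There is no real obstacle here: the only thing worth double-checking is that truncation is nonexpansive, i.e.\ $|v-v_M|\le|v|$, which is immediate from the elementary inequality $|t-\max\{-M,\min\{M,t\}\}|\le|t|$ valid for all real $t$. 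No translation or convolution of the weight is involved, so none of the machinery of Section~\ref{AVE} is needed at this step.
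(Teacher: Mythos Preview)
Your proof is correct and follows essentially the same route as the paper: define the symmetric truncation $v_M$, use $|v_M|\le|v|$ (the paper) or equivalently $|v-v_M|\le|v|$ (you) as the dominating function, and apply the Dominated Convergence Theorem with the weighted measure. Your version is marginally more explicit in bounding the difference directly, but there is no substantive difference.
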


\begin{proof} We set
$$ v_M(x,y):=\left\{
\begin{matrix}
M & {\mbox{ if }} v(x,y)\ge M,\\
v(x,y) & {\mbox{ if }} v(x,y)\in(-M,M),\\
-M & {\mbox{ if }} v(x,y)\le- M.
\end{matrix}
\right.$$
We have that~$v_M\to v$ a.e. in~$\R^{2n}$
and
$$ \frac{|v_M(x,y)|^p}{|x|^a|y|^a}\le 
\frac{|v(x,y)|^p}{|x|^a|y|^a}\in L^1(\R^{2n}),$$ thus the claim follows from
the Dominated Convergence Theorem.
\end{proof}

\begin{lemma}\label{678dvfdh7654wedrft}
Let~$v\in L^p_{a,a}(\R^{2n})$.
Then there exists a sequence of continuous and compactly supported
functions~$v_\delta:\R^{2n}\to\R$
such that~$\| v-v_\delta\|_{L^p_{a,a}(\R^{2n})}\to0$ as $\delta\to0$.
\end{lemma}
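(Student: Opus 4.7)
The plan is to approximate $v$ in three stages. First, Lemma~\ref{678dvfdh7654wedrft0} provides, for each $\eta>0$, a bounded function $v_M\in L^p_{a,a}(\R^{2n})\cap L^\infty(\R^{2n})$ with $\|v-v_M\|_{L^p_{a,a}(\R^{2n})}<\eta$. So I may assume $v$ is bounded.

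Second, I further truncate the support. Set $v_R(x,y):=v(x,y)\,\chi_{B_R\times B_R}(x,y)$. The integrand $|v-v_R|^p/(|x|^a|y|^a)$ is dominated by $|v|^p/(|x|^a|y|^a)\in L^1(\R^{2n})$ and converges pointwise to $0$ as $R\to+\infty$, so by the Dominated Convergence Theorem $\|v-v_R\|_{L^p_{a,a}(\R^{2n})}\to0$. I therefore reduce to a bounded $v$ supported in $B_R\times B_R$.

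Third, and this is the heart of the argument, I observe that the weighted measure
$$d\mu(x,y):=\frac{dx\,dy}{|x|^a|y|^a}$$
is a locally finite Borel (hence Radon) measure on $\R^{2n}$: by Fubini,
$$\mu(B_R\times B_R)=\left(\int_{B_R}\frac{dx}{|x|^a}\right)^2<+\infty,$$
since $a<n/2<n$ by~\eqref{range}. Applying Lusin's Theorem for Radon measures on $\R^{2n}$ to the bounded, compactly supported function $v$ yields, for any $\varepsilon>0$, a function $v_\delta\in C_c(\R^{2n})$ with $\supp v_\delta\subset B_{R+1}\times B_{R+1}$, $\|v_\delta\|_{L^\infty(\R^{2n})}\le\|v\|_{L^\infty(\R^{2n})}$, and $\mu(\{v\neq v_\delta\})<\varepsilon$. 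Consequently
$$\|v-v_\delta\|_{L^p_{a,a}(\R^{2n})}^p\le\bigl(2\|v\|_{L^\infty(\R^{2n})}\bigr)^p\,\mu(\{v\neq v_\delta\})\le \bigl(2\|v\|_{L^\infty(\R^{2n})}\bigr)^p\,\varepsilon,$$
which can be made arbitrarily small. The triangle inequality then combines the three stages.

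The main subtlety is the singular, non-translation-invariant weight, which obstructs naive convolution-based approximations; the point is that the range~\eqref{range} forces $d\mu$ to be a locally finite Radon measure on $\R^{2n}$, so Lusin's Theorem applies in this weighted setting exactly as in the classical unweighted one.
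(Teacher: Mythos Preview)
Your proof is correct and follows essentially the same three-stage strategy as the paper: reduce to bounded functions via Lemma~\ref{678dvfdh7654wedrft0}, truncate the support by a cutoff (you use~$\chi_{B_R\times B_R}$, the paper uses a smooth~$\tau_j$ supported in a ball of~$\R^{2n}$, but both work by dominated convergence), and then apply Lusin's Theorem with respect to the locally finite Radon measure~$d\mu(x,y)=|x|^{-a}|y|^{-a}\,dx\,dy$. The key observation---that~\eqref{range} makes this weighted measure finite on compact sets so that Lusin applies---is exactly the one the paper uses.
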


\begin{proof} In the light of Lemma~\ref{678dvfdh7654wedrft0},
we can also assume that
\begin{equation}\label{dfgfdjnthgrefdwrg}
v\in L^\infty(\R^{2n}).
\end{equation}
Let~$\tau_j\in C^\infty(\R^{2n},[0,1])$, with~$\tau_j(P)=1$
if~$|P|\le j$ and~$\tau(P)=0$ if~$|P|\ge j+1$.
Let~$v_j:=\tau_j u$. Then~$v_j\to v$ pointwise in~$\R^{2n}$
as~$j\to+\infty$, and
$$
\frac{|v(x,y)-v_j(x,y)|^p}{|x|^a|y|^a}
\le \frac{2^p |v(x,y)|^p}{|x|^a|y|^a}\in L^1(\R^{2n}).$$
As a consequence, by the Dominated Convergence Theorem,
$$ \lim_{j\to+\infty}\| v-v_j\|_{L^p_{a,a}(\R^{2n})}=0.$$
So, fixed~$\delta>0$, we find~$j_\delta\in\N$ such that
\begin{equation}\label{9dcvb4wersdvqewas0009}
\| v-v_{j_\delta}\|_{L^p_{a,a}(\R^{2n})}\le \delta.\end{equation}
Notice that~$v_{j_\delta}$ is supported
in~$\{ P\in\R^{2n} {\mbox{ s.t. }} |P|\le j_\delta+1\}$.

Also, given a set~$A\subseteq\R^{2n}$, we set
$$ \mu_{a,a}(A):=
\iint_{A}\frac{dx\,dy}{|x|^a|y|^a}.$$
By~\eqref{range}, we see that~$\mu_{a,a}$ is finite
over compact sets.
So, we can use Lusin's Theorem (see e.g. Theorem~7.10 in~\cite{Fol},
and page~121 there for the definition of the uniform norm).
We obtain that there exists a closed set~$E_\delta\subset\R^{2n}$
and a continuous and compactly supported function~$v_\delta:\R^{2n}\to\R$
such that~$v_{\delta}=v_{j_\delta}$ in~$\R^{2n}\setminus E_\delta$,
$\mu_{a,a}(E_\delta)\le\delta^p$ and~$\|v_\delta\|_{L^\infty(\R^{2n})}\le
\|v_{j_\delta}\|_{L^\infty(\R^{2n})}$.

In particular, since~$\tau_{j_\delta}\in[0,1]$,
we have that~$\|v_\delta\|_{L^\infty(\R^{2n})}
\le\|v\|_{L^\infty(\R^{2n})}$,
and this quantity is finite, due to~\eqref{dfgfdjnthgrefdwrg}. Therefore
\begin{eqnarray*}&& \| v_{j_\delta}-v_{\delta}\|_{L^p_{a,a}(\R^{2n})}^p
=
\iint_{E_\delta} |v_{j_\delta}(x,y)-v_{\delta}(x,y)|^p\,\frac{dx}{|x|^a}
\,\frac{dy}{|y|^a}
\\ &&\qquad\le  2^p \big( \|v_{j_\delta}\|_{L^\infty(\R^{2n})}^p+
\|v_{\delta}\|_{L^\infty(\R^{2n})}^p\big)\mu_{a,a}(E_\delta)\le
2^{p+1} \|v\|_{L^\infty(\R^{2n})}^p \delta^p.\end{eqnarray*}
{F}rom this and~\eqref{9dcvb4wersdvqewas0009},
we obtain that~$\|v -v_{\delta}\|_{L^p_{a,a}(\R^{2n})}
\leq\big(1+4\|v\|_{L^\infty(\R^{2n})}\big)\delta$, which concludes the proof.
\end{proof}

We remark that a simpler version of Lemma~\ref{678dvfdh7654wedrft}
also holds true in~$L^{p^*_s}_a(\R^n)$. We state the result explicitly
as follows:

\begin{lemma}\label{678dvfdh7654wedrft-s}
Let~$u\in L^{p^*_s}_a(\R^n)$.
Then there exists a sequence of continuous and compactly supported
functions~$u_\delta:\R^{n}\to\R$
such that~$\| u-u_\delta \|_{L^{p^*_s}_a(\R^n)}\to0$ as $\delta\to0$.
\end{lemma}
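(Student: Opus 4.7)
The plan is to mirror the three-step argument of Lemma~\ref{678dvfdh7654wedrft}, simplified to the single-variable weighted space. Set $b := 2ap^*_s/p = 2an/(n-sp)$ and write $d\mu_a := dx/|x|^b$. The key observation is that the range assumption~\eqref{range}, namely $a < (n-sp)/2$, is equivalent to $b < n$, which ensures that $\mu_a$ is finite on every compact subset of $\R^n$. This is the only structural fact we will need beyond standard measure theory.

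First I would establish the bounded-truncation analogue of Lemma~\ref{678dvfdh7654wedrft0}: given $u \in L^{p^*_s}_a(\R^n)$, set $u_M := \max(-M,\min(M,u))$. Then $u_M \to u$ pointwise a.e., and $|u - u_M|^{p^*_s}/|x|^b \le 2^{p^*_s}|u|^{p^*_s}/|x|^b \in L^1(\R^n)$, so the Dominated Convergence Theorem gives $\|u - u_M\|_{L^{p^*_s}_a(\R^n)} \to 0$ as $M \to +\infty$. Fix $\delta > 0$, pick $M_\delta$ with $\|u - u_{M_\delta}\|_{L^{p^*_s}_a(\R^n)} \le \delta$, and observe that $u_{M_\delta} \in L^\infty(\R^n) \cap L^{p^*_s}_a(\R^n)$.

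Next, take $\tau_j \in C^\infty(\R^n,[0,1])$ with $\tau_j = 1$ on $B_j$ and $\operatorname{supp}\tau_j \subset B_{j+1}$, and set $w_j := \tau_j u_{M_\delta}$. Another application of the Dominated Convergence Theorem (using $|u_{M_\delta} - w_j|^{p^*_s}/|x|^b \le 2^{p^*_s}|u_{M_\delta}|^{p^*_s}/|x|^b \in L^1$) yields $\|u_{M_\delta} - w_{j_\delta}\|_{L^{p^*_s}_a(\R^n)} \le \delta$ for some $j_\delta \in \N$. Now $w_{j_\delta}$ is bounded by $M_\delta$ and supported in the compact set $\overline{B_{j_\delta+1}}$.

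Finally, since $\mu_a$ is finite on $\overline{B_{j_\delta+1}}$ (thanks to $b<n$), Lusin's Theorem (Theorem~7.10 in~\cite{Fol}) produces a continuous, compactly supported $u_\delta:\R^n \to \R$ and a closed set $E_\delta \subset \R^n$ with $w_{j_\delta} = u_\delta$ on $\R^n \setminus E_\delta$, $\mu_a(E_\delta) \le \delta^{p^*_s}$, and $\|u_\delta\|_{L^\infty(\R^n)} \le \|w_{j_\delta}\|_{L^\infty(\R^n)} \le M_\delta$. Consequently,
\[
\|w_{j_\delta} - u_\delta\|_{L^{p^*_s}_a(\R^n)}^{p^*_s}
= \int_{E_\delta} |w_{j_\delta}(x) - u_\delta(x)|^{p^*_s}\,\frac{dx}{|x|^b}
\le 2^{p^*_s+1} M_\delta^{p^*_s}\, \delta^{p^*_s},
\]
and the triangle inequality combines the three estimates to give $\|u - u_\delta\|_{L^{p^*_s}_a(\R^n)} \le \bigl(2 + 2^{1+1/p^*_s} M_\delta\bigr)\delta$, which tends to $0$ as $\delta \to 0$ after renaming the parameter. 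The only non-routine point is verifying that the range condition~\eqref{range} is precisely what makes $\mu_a$ locally finite and thus makes Lusin's Theorem applicable; every other step is a direct transcription of the argument used for Lemma~\ref{678dvfdh7654wedrft}.
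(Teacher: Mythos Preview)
Your argument is essentially the paper's own proof: truncate to a bounded function, multiply by a smooth cutoff to obtain compact support, then invoke Lusin's Theorem, using that $b=2ap^*_s/p<n$ (equivalent to~\eqref{range}) makes the weighted measure $\mu_a$ locally finite.

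One point needs tightening. You couple $M_\delta$ to $\delta$ by requiring $\|u-u_{M_\delta}\|\le\delta$, and then your final estimate reads $\|u-u_\delta\|\le\bigl(2+2^{1+1/p^*_s}M_\delta\bigr)\delta$. Since $M_\delta$ may blow up as $\delta\to0$ (at a rate governed only by the tail of $u$), this product need not tend to zero, and ``renaming the parameter'' does not repair it. The paper sidesteps this by first reducing to $u\in L^\infty(\R^n)$ once and for all, so that the bound $\|u\|_{L^\infty}$ replaces your $M_\delta$ and is fixed; equivalently, in your scheme you can simply take $\mu_a(E_\delta)\le(\delta/M_\delta)^{p^*_s}$ in the Lusin step, which makes the third error $\le C\delta$ and the total $\le(2+C)\delta$.
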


\begin{proof} The argument is a simplified version of
the one given for Lemma~\ref{678dvfdh7654wedrft}. Full details are
provided for the reader's convenience.
First of all, by the Dominated Convergence Theorem,
we can approximate $u$
in~$L^{p^*_s}_a(\R^n)$ with a sequence of bounded functions
$$ u_M(x):=\left\{
\begin{matrix}
M & {\mbox{ if }} u(x)\ge M,\\
u(x) & {\mbox{ if }} u(x)\in(-M,M),\\
-M & {\mbox{ if }} u(x)\le- M.
\end{matrix}
\right.$$
Consequently, we can also assume that
\begin{equation}\label{dfgfdjnthgrefdwrg-s}
u\in L^\infty(\R^{n}).
\end{equation}
Let~$\tau_j\in C^\infty(\R^{n},[0,1])$, with~$\tau_j(P)=1$
if~$|P|\le j$ and~$\tau(P)=0$ if~$|P|\ge j+1$.
Let~$u_j:=\tau_j u$. Then~$u_j\to u$ pointwise in~$\R^{n}$
as~$j\to+\infty$,
and
$$
\frac{|u(x)-u_j(x)|^{p^*_s}}{|x|^{\frac{2ap^*_s}{p}}}
\le \frac{2^{p^*_s} |u(x)|^{p^*_s}}{|x|^{\frac{2ap^*_s}{p}}}\in L^1(\R^{2n}).$$
As a consequence, by the Dominated Convergence Theorem,
$$ \lim_{j\to+\infty}\| u-u_j\|_{L^{p^*_s}_a(\R^n)}=0.$$
So, fixed~$\delta>0$, we find~$j_\delta\in\N$ such that
\begin{equation}\label{9dcvb4wersdvqewas0009-s}
\| u-u_{j_\delta}\|_{L^{p^*_s}_a(\R^n)}\le \delta.\end{equation}
Notice that~$u_{j_\delta}$ is supported
in~$\overline{B_{j_\delta+1}}$.
Also, given a set~$A\subseteq\R^{n}$, we set
$$ \mu_{a}(A):=
\int_{A}\frac{dx}{|x|^{\frac{2ap^*_s}{p}}}.$$
By~\eqref{range}, we see that~$\mu_{a}$ is finite
over compact sets.
So, we can use Lusin's Theorem (see e.g. Theorem~7.10 in~\cite{Fol},
and page~121 there for the definition of the uniform norm).
We obtain that there exists a closed set~$E_\delta\subset\R^{n}$
and a continuous and compactly supported function~$u_\delta:\R^{n}\to\R$
such that~$u_{\delta}=u_{j_\delta}$ in~$\R^{n}\setminus E_\delta$,
$\mu_{a}(E_\delta)\le\delta^{p^*_s}$ and~$\|u_\delta\|_{L^\infty(\R^{n})}\le
\|u_{j_\delta}\|_{L^\infty(\R^{n})}$.

In particular, since~$\tau_{j_\delta}\in[0,1]$,
we have that~$\|u_\delta\|_{L^\infty(\R^{n})}
\le\|u\|_{L^\infty(\R^{n})}$,
and this quantity is finite, due to~\eqref{dfgfdjnthgrefdwrg-s}. Therefore
\begin{eqnarray*}&& \| u_{j_\delta}-u_{\delta}\|_{L^{p^*_s}_a(\R^n)}^{p^*_s}
=
\int_{E_\delta} |u_{j_\delta}(x)-u_{\delta}(x)|^{p^*_s}
\,\frac{dx}{|x|^{\frac{2ap^*_s}{p}}}
\\ &&\qquad\le  2^{p^*_s} \big( \|u_{j_\delta}\|_{L^\infty(\R^{n})}^{p^*_s}+
\|u_{\delta}\|_{L^\infty(\R^{n})}^{p^*_s}\big)\mu_{a}(E_\delta)\le
2^{p^*_s+1}\|v\|_{L^\infty(\R^{n})}^{p^*_s} \delta^{p^*_s}.\end{eqnarray*}
{F}rom this and~\eqref{9dcvb4wersdvqewas0009-s},
we obtain that~$\|u-u_{\delta}\|_{L^{p^*_s}_a(\R^n)}
\leq\big(1+4\|u\|_{L^\infty(\R^{n})}\big)\delta$, which concludes the proof.
\end{proof}

\section{Approximation with smooth functions}\label{sec:smooth}

In this section we show that we can approximate a function in the space $\dot{W}^{s,p}_a(\R^n)$
with a smooth one. 
We remark that, if there are no weights,
smooth approximations are much more standard,
since one can use directly
the continuity of the translations in $L^p(\R^{2n})$.
Since the weights are not translation invariant, and the continuity
of the translations in Lebesgue spaces is, in general, not
uniform, a more careful procedure is needed in our case
(namely, to overcome this difficulty we exploit
the techniques developed in Sections~\ref{AVE}
and~\ref{AVE2}).

We take a radially symmetric, radially decreasing
function $\eta\in C^\infty_0(\R^n)$ 
such that $\eta\ge0$, $\supp\eta\subseteq B_1$ and 
\begin{equation}\label{int one}
\int_{B_1}\eta(x)\,dx=1,
\end{equation}
and, for $\epsilon>0$, we define the mollifier $\eta_\epsilon$ as 
$$ \eta_\epsilon(x):=\frac{1}{\epsilon^{n}}\eta\left(\frac{x}{\epsilon}\right), 
\quad {\mbox{ for any }} x\in\R^n.$$ 
Then, given $u\in\dot{W}^{s,p}_a(\R^n)$, we consider its standard
convolution with the 
mollifier $\eta_\epsilon$. That is, for any $\epsilon>0$, we define 
\begin{equation}\label{STARR2}
u_\epsilon(x):=(u*\eta_\epsilon)(x)=\int_{\R^n} u(x-z)\,\eta_\epsilon(z)\,dz, \quad {\mbox{ for any }} x\in\R^n.\end{equation}
By construction, $u_\epsilon\in C^\infty(\R^n)$. 
We will show that, if $\epsilon$ is sufficiently small, then 
the error made approximating $u$ with $u_\epsilon$ is ``small''. 
The rigorous result is the following: 

\begin{lemma}\label{lemma smooth}
Let~$u\in\dot{W}^{s,p}_a(\R^n)$. Then
$$ \lim_{\epsilon\to 0} \|u-u_\epsilon\|_{\dot{W}^{s,p}_a(\R^n)}=0.$$
\end{lemma}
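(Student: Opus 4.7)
The plan is to combine the density results of Section~\ref{AVE2} with the mollification bounds of Section~\ref{AVE} in a classical three-term telescoping, thereby bypassing the failure of translation continuity in weighted Lebesgue spaces. The starting observation is that, for $v^{(u)}(x,y):=(u(x)-u(y))/|x-y|^{n/p+s}$ as in~\eqref{9as8dfb0oijh4trdskjjhh}, and with the convolution $\star$ defined in~\eqref{STARR}, the identity
$$v^{(u_\epsilon)}(x,y)=\int_{\R^n}\frac{u(x-z)-u(y-z)}{|x-y|^{n/p+s}}\,\eta_\epsilon(z)\,dz=(v^{(u)}\star\eta_\epsilon)(x,y)$$
holds, because $|x-y|=|(x-z)-(y-z)|$. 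Hence
$$[u-u_\epsilon]_{\widetilde{W}^{s,p}_a(\R^n)}=\|v^{(u)}-v^{(u)}\star\eta_\epsilon\|_{L^p_{a,a}(\R^{2n})},$$
and the lemma splits into two parallel statements: $v^{(u)}\star\eta_\epsilon\to v^{(u)}$ in $L^p_{a,a}(\R^{2n})$, and $u*\eta_\epsilon\to u$ in $L^{p^*_s}_a(\R^n)$.

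I would treat both statements by the same $\delta/3$ scheme. Fix $\delta>0$. Lemma~\ref{678dvfdh7654wedrft} produces a continuous, compactly supported $V:\R^{2n}\to\R$ with $\|v^{(u)}-V\|_{L^p_{a,a}(\R^{2n})}\le\delta$, and Lemma~\ref{678dvfdh7654wedrft-s} produces a continuous, compactly supported $w:\R^n\to\R$ with $\|u-w\|_{L^{p^*_s}_a(\R^n)}\le\delta$. I would then decompose
$$v^{(u)}-v^{(u)}\star\eta_\epsilon=(v^{(u)}-V)-(v^{(u)}-V)\star\eta_\epsilon+(V-V\star\eta_\epsilon),$$
and analogously for $u-u_\epsilon$, controlling the first two terms by Propositions~\ref{7scdvgr3fcedrefd} and~\ref{7scdvgr3fcedrefd-s}, which bound the weighted norm of the convolution by the weighted norm of the function. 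The two propositions are stated for the reference mollifier $\eta_o$, but the constant obtained in their proofs depends on the mollifier only through the normalisation $\int\eta_o=1$ (the computation in~\eqref{789sdfqfgft4tysdfhjk} is scale invariant), so the same bounds apply to $\eta_\epsilon$ for every $\epsilon\in(0,1]$ with a constant independent of $\epsilon$.

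The remaining task is to show $V\star\eta_\epsilon\to V$ in $L^p_{a,a}(\R^{2n})$ and $w*\eta_\epsilon\to w$ in $L^{p^*_s}_a(\R^n)$. This is standard: since $V$ and $w$ are uniformly continuous with compact support, the two convolutions converge uniformly, and their supports lie in a fixed compact set for every $\epsilon\in(0,1]$; since the weights $|x|^{-a}|y|^{-a}$ and $|x|^{-2ap^*_s/p}$ are locally integrable under~\eqref{range}, uniform convergence on a bounded set upgrades to convergence in the respective weighted norms. Combining the three pieces yields $\|u-u_\epsilon\|_{\dot{W}^{s,p}_a(\R^n)}\le C\delta$ for every $\epsilon$ sufficiently small, and the arbitrariness of $\delta$ closes the argument. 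The genuine obstacle, namely the absence of uniform continuity of translations in these weighted spaces, has already been absorbed by the averaged estimates of Propositions~\ref{WE} and~\ref{WE-simple}; the only new ingredient needed here is the key identification $v^{(u_\epsilon)}=v^{(u)}\star\eta_\epsilon$, which converts mollification of $u$ into the weighted-friendly convolution of its difference quotient.
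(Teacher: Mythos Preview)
Your proposal is correct and follows essentially the same route as the paper: both argue via the key identity $v^{(u_\epsilon)}=v^{(u)}\star\eta_\epsilon$, approximate $v^{(u)}$ and $u$ by continuous compactly supported functions via Lemmata~\ref{678dvfdh7654wedrft} and~\ref{678dvfdh7654wedrft-s}, control the mollification of the differences through Propositions~\ref{7scdvgr3fcedrefd} and~\ref{7scdvgr3fcedrefd-s}, and handle the mollification of the continuous approximants by uniform continuity plus local integrability of the weights. Your observation that the constants in those propositions are scale-invariant (so that they apply to $\eta_\epsilon$ uniformly in $\epsilon$) is a point the paper uses implicitly; making it explicit is a welcome clarification.
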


\begin{proof} 
We first check that
\begin{equation}\label{7890sdfgh}
\lim_{\epsilon\to 0}\|u-u_\epsilon\|_{L^{p^*_s}_a(\R^n)}=0.\end{equation}
To this scope, we start by proving that
\begin{equation}\label{PRE-s}
\begin{split}&{\mbox{if~$\widetilde u:\R^n\to\R$ is continuous and compactly supported, then}}
\\&\qquad \lim_{\epsilon\to0} \| \widetilde u-\widetilde u*
\eta_\epsilon\|_{L^{p^*_s}_a(\R^n)}=0.
\end{split}\end{equation}
For this, we fix~$\epsilon_o>0$
and we use the fact that~$\widetilde u$ is uniformly continuous
to write that
$$ \sup_{z\in B_1}
|\widetilde u(x-\epsilon z)-\widetilde u(x)|\le \epsilon_o,$$
provided that~$\epsilon$ is small
enough (possibly in dependence of~$\epsilon_o$).
Also, since~$\widetilde u$ is compactly supported, say in~$B_R$,
and writing~$b:=\frac{2ap^*_s}{p}$, we obtain that
\begin{eqnarray*}
&& \int_{\R^{n}} |\widetilde u(x)-\widetilde u*\eta_\epsilon(x)|^{p^*_s}
\frac{dx}{|x|^b}
\le \int_{B_{R+1}} 
\left[ \int_{B_1} \big|\widetilde u(x)-\widetilde u
(x-\epsilon z)\big|\,\eta(z)\,dz\right]^{p^*_s}
\frac{dx}{|x|^b}\\
&&\qquad\le \epsilon_o^{p^*_s}\, \int_{B_{R+1}}\frac{dx}{|x|^b}= C\epsilon_o^{p^*_s},
\end{eqnarray*}
with~$C$ independent of~$\epsilon$ and~$\epsilon_o$.
Since~$\epsilon_o$ can be taken arbitrarily small,
the proof of~\eqref{PRE-s} is complete.

Now we prove~\eqref{7890sdfgh}. For this,
we fix~$\epsilon_o>0$, to be taken as small as we wish in the sequel,
and we use Lemma~\ref{678dvfdh7654wedrft-s} to find
a continuous and compactly supported
function~$\widetilde u:\R^{n}\to\R$
such that~$\| u-\widetilde u\|_{L^{p^*_s}_a(\R^n)}\le\epsilon_o$.

By Proposition~\ref{7scdvgr3fcedrefd-s}, we deduce that
$$ \|u*\eta_\epsilon -\widetilde u*\eta_\epsilon\|_{L^{p^*_s}_a(\R^n)}
=\|(u-\widetilde u)*\eta_\epsilon\|_{L^{p^*_s}_a(\R^n)}\le
C \|u-\widetilde u\|_{L^{p^*_s}_a(\R^n)}\le C\epsilon_o.$$
Furthermore, by~\eqref{PRE-s}, we know that
$$ \| \widetilde u-\widetilde u*
\eta_\epsilon\|_{L^{p^*_s}_a(\R^n)}\le\epsilon_o,$$
as long as $\epsilon$ is sufficiently small. By collecting these pieces
of information, we conclude that
\begin{eqnarray*} \|u-u_\epsilon\|_{L^{p^*_s}_a(\R^n)}&\le&
\|u-\widetilde u\|_{L^{p^*_s}_a(\R^n)}+
\|\widetilde u-\widetilde u*\eta_\epsilon\|_{L^{p^*_s}_a(\R^n)}
+\|\widetilde u*\eta_\epsilon-u*\eta_\epsilon\|_{L^{p^*_s}_a(\R^n)}
\\&\le& (2+C)\epsilon_o.\end{eqnarray*}
This completes the proof of~\eqref{7890sdfgh}

Now we recall the notation in~\eqref{STARR} and we prove that
\begin{equation}\label{PRE}
\begin{split}&{\mbox{if~$v:\R^{2n}\to\R$ is continuous and compactly supported, then}}
\\&\qquad \lim_{\epsilon\to0} \| v-v\star\eta_\epsilon\|_{L^p_{a,a}(\R^{2n})}=0.
\end{split}\end{equation}
For this, we fix~$\epsilon_o>0$
and we use the fact that~$v$ is uniformly continuous
to write that
$$ \sup_{z\in B_1}
|v(x-\epsilon z,y-\epsilon z)-v(x,y)|\le \epsilon_o,$$
provided that~$\epsilon$ is small enough (possibly in dependence of~$\epsilon_o$).
Also, since~$v$ is compactly supported, say in~$\{ |(x,y)|\le R\}$,
for some~$R>0$, we have that
$$ v(x,y)=0=v(x-\epsilon z,y-\epsilon z)$$
if~$z\in B_1$ and~$\max\{ |x|,|y|\}\ge R+1$, as long as~$\epsilon<1$. Moreover
$$ v(x,y)-v\star\eta_\epsilon(x,y)
= \int_{B_1} \Big(v(x,y)-v(x-\epsilon z,y-\epsilon z)\Big)\,\eta(z)\,dz,$$
and, as a consequence,
\begin{eqnarray*}
&& \iint_{\R^{2n}} |v(x,y)-v\star\eta_\epsilon(x,y)|^p
\frac{dx\,dy}{|x|^a|y|^a}\\
&\le& \iint_{B_{R+1}\times B_{R+1}} 
\left[ \int_{B_1} \big|
v(x,y)-v(x-\epsilon z,y-\epsilon z)\big|\,\eta(z)\,dz\right]^p
\frac{dx\,dy}{|x|^a|y|^a}\\
&\le& \epsilon_o^p\, \iint_{B_{R+1}\times B_{R+1}}\frac{dx\,dy}{|x|^a|y|^a}\\
&=& C\epsilon_o^p,
\end{eqnarray*}
with~$C$ depending on~$v$,
but independent of~$\epsilon$ and~$\epsilon_o$.
Since~$\epsilon_o$ can be taken arbitrarily small,
the proof of~\eqref{PRE} is complete.

Now we are in the position of completing the proof of Lemma~\ref{lemma smooth}.
We remark that,
by~\eqref{7890sdfgh}, and 
recalling \eqref{norm} and~\eqref{norm2}, in order to prove
Lemma~\ref{lemma smooth},
it only remains to show that 
\begin{equation}\label{limit}
\lim_{\epsilon\to0}\iint_{\R^{2n}}\frac{|u(x)-u_\epsilon(x)-u(y)+u_\epsilon(y)|^p}{|x-y|^{n+sp}}\,\frac{dx}{|x|^a}
\,\frac{dy}{|y|^a}=0.
\end{equation}
To this goal, we let
\begin{equation*} v^{(u)}(x,y):= \frac{u(x)-u(y)}{
|x-y|^{\frac{n}{p}+s}}.\end{equation*}
By comparing~\eqref{STARR}
and~\eqref{STARR2}, we see that
\begin{equation}\label{PRE1}
\begin{split}
& v^{(u)} \star\eta_\epsilon(x,y)
=\int_{\R^n} v^{(u)}(x-z,y-z)\,\eta_\epsilon(z)\,dz \\
&\quad=\int_{\R^n} \frac{u(x-z)-u(y-z)}{
|x-y|^{\frac{n}{p}+s}}\,\eta_\epsilon(z)\,dz = 
\frac{u*\eta_\epsilon(x)-u*\eta_\epsilon(y)}{|x-y|^{\frac{n}{p}+s}}=v^{(u*\eta_\epsilon)}(x,y).
\end{split}
\end{equation}
We fix~$\epsilon_o>0$, to be taken as small as we wish
in the sequel, and use
Lemma~\ref{678dvfdh7654wedrft},
to find a continuous and compactly supported
function~$v$
such that
\begin{equation}\label{NN1}
\| v^{(u)}-v\|_{L^p_{a,a}(\R^{2n})}\le\epsilon_o.\end{equation}
Notice that, by~\eqref{PRE},
\begin{equation}\label{NN1.1}
\| v-v\star \eta_\epsilon\|_{L^p_{a,a}(\R^{2n})}\le\epsilon_o,\end{equation}
as long as~$\epsilon$ is sufficiently small.

Moreover, by Proposition~\ref{7scdvgr3fcedrefd} (applied here to
the function~$v^{(u)}-v$)
and~\eqref{NN1}, we have that
\begin{equation}\label{NN2}
\big\| (v^{(u)}-v)\star\eta_\epsilon\big\|_{L^p_{a,a}(\R^{2n})}\le
C\,\| v^{(u)}-v\|_{L^p_{a,a}(\R^{2n})}\le C \epsilon_o.\end{equation}
Also, by~\eqref{9as8dfb0oijh4trdskjjhh}
$$ [u-u*\eta_\epsilon]_{\widetilde{W}^{s,p}_a(\R^n)}=
\| v^{(u-u*\eta_\epsilon)}\|_{L^p_{a,a}(\R^{2n})}=
\| v^{(u)}-v^{(u*\eta_\epsilon)}\|_{L^p_{a,a}(\R^{2n})}.$$
Thus, recalling~\eqref{PRE1},
$$ [u-u*\eta_\epsilon]_{\widetilde{W}^{s,p}_a(\R^n)}=
\| v^{(u)}-v^{(u)}\star \eta_\epsilon\|_{L^p_{a,a}(\R^{2n})}.$$
Accordingly, by~\eqref{NN1}, \eqref{NN1.1} and~\eqref{NN2},
\begin{eqnarray*}
[u-u*\eta_\epsilon]_{\widetilde{W}^{s,p}_a(\R^n)}&\le&
\| v^{(u)}-v\|_{L^p_{a,a}(\R^{2n})}+
\| v-v\star \eta_\epsilon\|_{L^p_{a,a}(\R^{2n})}+
\| v\star \eta_\epsilon-v^{(u)}\star \eta_\epsilon\|_{L^p_{a,a}(\R^{2n})}\\
&\le& (2+C)\epsilon_o.
\end{eqnarray*}
Since~$\epsilon_o$ can be taken arbitrarily small, we have proved~\eqref{limit},
and therefore the proof of Lemma~\ref{lemma smooth} is
complete.
\end{proof}

\section{Proof of Theorem \ref{TH}}\label{sec:proof}

Let $u\in\dot{W}^{s,p}_a(\R^n)$, and fix $\delta>0$. 
If $\tau_j$ is as in Lemma \ref{lemma support}, then for $j$ large enough we have that 
\begin{equation}\label{primo}
\|u-\tau_ju\|_{\dot{W}^{s,p}_a(\R^n)}<\frac{\delta}{2},
\end{equation}
thanks to Lemma \ref{lemma support}. 

Now, for any $\epsilon>0$, let $\eta_\epsilon$ be the mollifier defined 
at the beginning of Section \ref{sec:smooth}. We set 
$$ \rho_\epsilon:=\tau_j u*\eta_\epsilon.$$ 
By construction, $\rho_\epsilon\in C^\infty(\R^n)$. 
Moreover, standard properties of the convolution imply that 
$$ \supp \rho_\epsilon\subseteq \supp(\tau_j u)+\overline{B}_\epsilon. $$
Also (see e.g. Lemma 9 in \cite{FSV}) one sees that 
$$ \supp(\tau_j u)\subseteq (\supp \tau_j)\cap (\supp u)\subseteq \overline{B}_{2j}\cap (\supp u).$$ 
Hence 
$$ \supp \rho_\epsilon\subseteq \left(\overline{B}_{2j}\cap (\supp u)\right) +\overline{B}_\epsilon.$$
As a consequence, $\rho_\epsilon\in C^\infty_0(\R^n)$. 

Furthermore, Lemma \ref{lemma smooth} gives that 
$$ \|\rho_\epsilon-\tau_ju\|_{\dot{W}^{s,p}_a(\R^n)}<\frac{\delta}{2}, $$
if $\epsilon$ is sufficiently small. 
Therefore, from this and \eqref{primo} we obtain that 
$$ \|u-\rho_\epsilon\|_{\dot{W}^{s,p}_a(\R^n)}\le 
\|u-\tau_ju\|_{\dot{W}^{s,p}_a(\R^n)}+ \|\tau_ju-\rho_\epsilon\|_{\dot{W}^{s,p}_a(\R^n)}
<\frac{\delta}{2}+\frac{\delta}{2}=\delta. $$
Since $\delta$ can be taken arbitrarily small, this concludes the proof of Theorem \ref{TH}.

\end{document}